\documentclass[11pt, reqno]{amsart}
\usepackage{amssymb,amscd,amsthm,amsxtra}
\usepackage{latexsym}

\usepackage{graphicx}  \usepackage{epstopdf}

\usepackage{color}
\usepackage[usenames,dvipsnames,svgnames,table]{xcolor}
\bibliography{refs}

\usepackage{hyperref}
\hypersetup{colorlinks = false}

\usepackage[mathscr]{euscript}
\renewcommand{\mathcal}[1]{{\mathscr#1}}


\vfuzz3pt 
\hfuzz2pt 

\newtheorem{theorem}{Theorem}[section]

\newtheorem{lemma}[theorem]{Lemma}
\newtheorem{prop}[theorem]{Proposition}

\theoremstyle{definition}
\newtheorem{definition}[theorem]{Definition}
\theoremstyle{remark}
\newtheorem{remark}[theorem]{Remark}
\numberwithin{equation}{section}

\newcommand{\R}{{\mathbb R}}
\newcommand{\N}{{\mathbb N}}

\renewcommand{\leq}{\leqslant}

\renewcommand{\geq}{\geqslant}

\renewcommand{\epsilon}{\varepsilon }

\newlength{\defbaselineskip}
\setlength{\defbaselineskip}{\baselineskip}
\newcommand{\setlinespacing}[1]
           {\setlength{\baselineskip}{#1 \defbaselineskip}}

\author[B. Barrios]{Bego\~na Barrios}
\address[Bego\~na Barrios]{Departamento de An\'{a}lisis Matem\'{a}tico, Universidad de La Laguna
\hfill \break \indent C/Astrof\'{\i}sico Francisco S\'{a}nchez s/n, 38271 -- La Laguna, Spain}
\email{bbarrios@ull.es}

\author[M. Medina]{Maria Medina}
\address[Maria Medina]{Facultad de Matem\'aticas, Pontificia Universidad Cat\'olica de Chile, Avenida Vicu\~na Mackenna 4860, Santiago, Chile}
\email{mamedinad@mat.puc.cl}

\begin{document}

\subjclass[2010]{35B50, 35S15, 47G20, }

\keywords{Non local operators, fractional Laplacian, mixed boundary conditions, maximum principle}

\thanks{The first author was partially supported by a MEC-Juan de la Cierva postdoctoral fellowship  FJCI-2014-20504 (Spain). The second author was supported by the grant FONDECYT Postdoctorado, No. 3160077 (Chile).}

\title[Maximum principles]{Strong maximum principles for fractional elliptic and parabolic problems with mixed boundary conditions}

\begin{abstract}
We present some comparison results for solutions to certain non local elliptic and parabolic problems that involve the fractional Laplacian operator and mixed boundary conditions, given by a zero Dirichlet datum on part of the complementary of the domain and zero Neumann data on the rest. These results represent a non local generalization of a Hopf's lemma for elliptic and parabolic problems with mixed conditions. In particular we prove the non local version of the results obtained by J. D\'avila and J. D\'avila-L. Dupaigne for the classical case $s=1$ in \cite{D} and \cite{DD} respectively.
\end{abstract}

\maketitle


\section{Introduction}

The aim of this work is to study some comparison results for a class of elliptic and parabolic problems that involve the fractional Laplacian operator. More precisely we will consider the following non local, elliptic and parabolic, mixed problems,

\begin{equation}\label{prob}
\left\{
\begin{aligned}
(-\Delta)^s u&=f &&  \mbox{in } \Omega,\\
{u}&\geq0&& \mbox{in } \mathbb{R}^{N},\\
u&=0 && \mbox{in } \Sigma_1,\\
\mathcal{N}_s u&=0 &&\mbox{in }\Sigma_2,
\end{aligned}\right.
\end{equation}
\begin{equation}\label{probP}
\left\{
\begin{aligned}
u_t+(-\Delta)^s u&={0} &&  \mbox{in } \Omega\times (0,+\infty),\\
{u}&\geq0&& \mbox{in } \mathbb{R}^{N},\\
u&=0 && \mbox{in } \Sigma_1\times(0,+\infty),\\
\mathcal{N}_s u&=0 &&\mbox{in }\Sigma_2\times(0,+\infty),\\
u(x,0)&=u_0(x) && \mbox{ in }\Omega.
\end{aligned}\right.
\end{equation}
Here $\Omega$ is a bounded domain of $\R^N$, $\Sigma_1$ and $\Sigma_2$ are two open sets of positive measure satisfying
\begin{equation}\label{sigmas}
\Sigma_1\cap\Sigma_2=\emptyset,\qquad \overline{\Sigma_1}\cup\overline{\Sigma_2}=\R^N\setminus\Omega,
\end{equation}
$f\in\mathcal{C}^{\infty}_{0}(\Omega)$, $f\gneqq 0$ and $u_0\geq 0$, $u_0\in L^{2}(\Omega)$. The operator $(-\Delta)^s$, $0<s<1$, is the well-known \emph{fractional laplacian}, which
is defined on smooth functions as
\begin{equation}\label{operador}
(-\Delta)^s u(x) =  a_{N,s}\int_{\R^N} \frac{u(x)-u(y)}{|x-y|^{N+2s}} dy,
\end{equation}
where $a_{N,s}$ is a normalization constant that is usually omitted for brevity. The integral in \eqref{operador} 
has to be understood in the principal value sense, that is, as the limit as $\epsilon\to 0$ of the 
same integral taken in {$\R^N\setminus B_\epsilon(x)$, i.e, the complementary of the ball of center $x$ and radius $\epsilon$}.  See for instance \cite{guida, PhS, Stein} for the basic properties of the operator and the normalization constant. Problems with non local diffusion that involve the fractional Laplacian operator, and other integro-differential operators, have been intensively studied in the last years since they appear when we try to model different physical situations as anomalous diffusion and quasi-geostrophic flows, turbulence and
water waves, molecular dynamics and relativistic quantum mechanics of stars
(see \cite{BoG,CaV,Co} and references). They also appear in mathematical finance (cf. \cite{A,Be,CoT}), elasticity problems \cite{signorini}, obstacle problems \cite{BFR15,BFR16, CF}, phase transition \cite{AB98, SV08b} and
crystal dislocation \cite{dfv, toland} among others.

By  $\mathcal{N}_s$ we denote the non local normal derivative, defined as
\begin{equation}\label{neumann}
\mathcal{N}_su(x):= a_{N,s}\int_\Omega\frac{u(x)-u(y)}{|x-y|^{N+2s}}\,dy,\qquad x\in\R^N\setminus\overline{\Omega}.
\end{equation}
This function was introduced by S. Dipierro, X. Ros-Oton and E. Valdinoci in \cite{DRV} where the authors proved that, when $s\to 1^{-}$, the classical Neumann boundary condition $\frac{\partial u}{\partial\nu}$ is recovered in some sense. Moreover they established a complete description of the eigenvalues of $(-\Delta)^s$ with zero non local Neumann boundary condition, an existence and uniqueness result for the elliptic problem and the main properties of the fractional heat equation (preservation of mass, decreasing energy and convergence to a constant when $t\to\infty$) with this type of boundary condition. It is fair to mention here that other Neumann type boundary conditions for the non local problems, that recover the classical one when the fractional parameter $s$ goes to $1$, have been considered in the literature (see for instance \cite{barles, bogdan, cortazar}). 

Nevertheless the one given by \eqref{neumann} allows us to work in a variational framework and, as the authors described in \cite[Section 2]{DRV}, also has a natural probabilistic interpretation that we summarize here to motivate  the study of the elliptic problem \eqref{prob} for a general Dirichlet condition: let $\Omega\subseteq\mathbb{R}^{N}$ be  a bounded domain whose complementary is divided in two parts, satisfying \eqref{sigmas}, such that in $\Sigma_1$ there is a Dirichlet condition $h$ and one of Neumann type in $\Sigma_2$.  Let us now consider a particle that is randomly moving starting at a point $x_0\in\Omega$. There are two possibilities; if the particle goes to $x_1\in\Sigma_1$ then a payoff is obtained, established by the Dirichlet condition $h$, and if it goes to $x_2\in \Sigma_2$ then  immediately comes back to  some $y\in\Omega$  with a probability that is proportional to $|x_2-y|^{-N-2s}$. It is clear that the previous situation can be written as follows
$$u(x)=h(x),\quad x\in\Sigma_1,$$
$$u(x)={c(x)}\int_{\Omega}{u(y)|x-y|^{-N-2s}\, dy},\quad x\in\Sigma_2.$$
Choosing ${c(x)}$ in order to normalize the probability measure, that is,
$${c(x)}\int_{\Omega}{|x-y|^{-N-2s}\, dy}=1,$$ we finally get this behavior can be written as
$$\mathcal{N}_su(x)=0,\quad x\in\Sigma_2,$$
where $\mathcal{N}_s$ was given in \eqref{neumann}.

\medskip

Our motivation to study problem \eqref{prob} comes also from the fact that, as in the local case, by comparison one easily gets that there exists $C=\max_{\Omega} f$ such that 
$$u(x)\leq C v(x),\quad x\in\Omega,$$
where $v$ is the solution of \eqref{prob}  with $f=1$. However, it is not clear whether the opposite inequality 
\begin{equation}\label{opposite}
v(x)\leq  \widetilde{C}u(x),\quad x\in\Omega,
\end{equation}
is also true. We point out here that in the case of the Dirichlet  problem ($\Sigma_1=\mathbb{R}^{N}\setminus\Omega$), the previous estimate is obtained using the Hopf's Lemma and the $\mathcal{C}^{s}$ regularity of the solutions up to the boundary (see \cite{quim}). In the local case ($s=1$) in \cite{D} J. D\'avila proved that \eqref{opposite} also holds for the mixed problem with a constant $\widetilde{C}$ that depends on $\|f v\|_{L^{1}(\Omega)}$. Here, adapting the arguments to the non local framework, we obtain the same type of result for the fractional elliptic problem with mixed boundary conditions (see Theorem \ref{main_elliptic} below). Moreover, generalizing some results of \cite{DD}, we also get the desired inequality in the parabolic case (see Theorem \ref{main_parabolic} below). It is remarkable to point out that an inequality like \eqref{opposite} would be very useful for example in the study of certain nonlinear problems such as, for instance, mixed problems with concave-convex nonlinearities with critical growth because, due to the lack of regularity up to the boundary of the domain, a suitable space to separate solutions is needed (see \cite{colorado} and the references therein for the case $s=1$). 

\medskip

To conclude this section let us state the main two results of this paper, which are the non local counterpart of \cite[Theorem 1]{D} and \cite[Theorem 2.15]{DD} respectively. 
Consider $\overline{\xi_0}$ the solution to

\begin{equation}\label{probV}
\left\{
\begin{aligned}
(-\Delta)^s \overline{\xi_0}&=1 &&  \mbox{in } \Omega,\\
\overline{\xi_0}&=0 && \mbox{in } \Sigma_1,\\
\mathcal{N}_s \overline{\xi_0}&=0 &&\mbox{in }\Sigma_2.
\end{aligned}\right.
\end{equation}

\noindent Thus, the following maximum principles hold:

\begin{theorem}\label{main_elliptic}
Let $u$ be the solution to \eqref{prob} with $f\in\mathcal{C}_0^\infty(\Omega)$, $f\geq 0$, and let $\overline{\xi_0}$ be the solution to \eqref{probV}. Then there exists a constant $c=c({N,s,}\Omega, \Sigma_1, \Sigma_2)>0$ such that
$$u(x)\geq c\left(\int_\Omega f(y)\overline{\xi_0}(y)\,dy\right)\overline{\xi_0}(x),\qquad x\in\Omega.$$
\end{theorem}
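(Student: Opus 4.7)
The approach I would take is to reduce the pointwise estimate to a lower bound on the Green's function of the mixed problem. Let $G(x,y)$ denote the Green's function for $(-\Delta)^s$ associated with \eqref{prob}; its existence, symmetry and positivity follow from the variational setting of \cite{DRV} together with the strong maximum principle. The representations
\begin{equation*}
u(x)=\int_\Omega G(x,y)f(y)\,dy,\qquad \overline{\xi_0}(x)=\int_\Omega G(x,y)\,dy
\end{equation*}
then reduce the theorem to the pointwise estimate
\begin{equation*}
G(x,y)\geq c\,\overline{\xi_0}(x)\,\overline{\xi_0}(y),\qquad x,y\in\Omega,
\end{equation*}
for some $c=c(N,s,\Omega,\Sigma_1,\Sigma_2)>0$; multiplying by $f(y)$ and integrating yields the claim.

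To establish this bound I would partition $\Omega$ as $\Omega_\delta\cup A_\delta$, where $\Omega_\delta=\{x\in\Omega:d(x,\Sigma_1)\geq\delta\}$ and $A_\delta=\Omega\setminus\Omega_\delta$. On $\Omega_\delta\times\Omega_\delta$, the strong maximum principle for the mixed problem gives $\overline{\xi_0}\geq \eta_\delta>0$, a nonlocal Harnack estimate gives $G(x,y)\geq c_\delta>0$, and $\overline{\xi_0}\leq\|\overline{\xi_0}\|_\infty$, so the inequality is immediate. For $y\in A_\delta$ and $x\in\Omega_\delta$ fixed, the function $G(x,\cdot)$ is a nonnegative solution of the homogeneous mixed problem away from $x$: it vanishes on $\Sigma_1$, satisfies $\mathcal{N}_s=0$ on $\Sigma_2$, and is bounded below on $\Omega_\delta$. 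A comparison with $\overline{\xi_0}$ via a nonlocal Hopf-type lemma then yields $G(x,y)\geq c_\delta\,\overline{\xi_0}(y)$, and the symmetry of $G$ handles the remaining case $x\in A_\delta$.

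The technical core — and the main obstacle — is this nonlocal Hopf-type lemma tailored to the mixed boundary conditions: if $v\geq 0$ satisfies $(-\Delta)^s v\geq 0$ in $\Omega$, $v=0$ on $\Sigma_1$, $\mathcal{N}_s v=0$ on $\Sigma_2$, and $v\geq\eta>0$ on some interior compact set, then $v\geq c(\eta)\,\overline{\xi_0}$ throughout $\Omega$. For the pure Dirichlet setting this comparison is classical (see \cite{quim}), but here one needs a barrier that simultaneously decays like $d(\cdot,\Sigma_1)^s$ near $\Sigma_1$ and satisfies $\mathcal{N}_s\leq 0$ on $\Sigma_2$. Calibrating such a barrier while controlling the nonlocal integral contributions from the whole of $\Sigma_2$ is the delicate step, and it is precisely where the geometric assumptions on $\Sigma_1,\Sigma_2$ in \eqref{sigmas} enter the argument.
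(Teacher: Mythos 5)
Your reduction to the Green's function bound $G(x,y)\geq c\,\overline{\xi_0}(x)\,\overline{\xi_0}(y)$ is a legitimate reformulation, but the argument then rests entirely on the ``nonlocal Hopf-type lemma with mixed boundary conditions'' that you state and do not prove, and that lemma is essentially the theorem itself: applied to $v=u$ (a nonnegative supersolution vanishing on $\Sigma_1$, with $\mathcal{N}_s u=0$ on $\Sigma_2$, bounded below on a compact set) it would give $u\geq c\,\overline{\xi_0}$ directly. All of the difficulty has been deferred to an unproven statement of the same depth. Worse, the barrier strategy you sketch for that lemma is precisely what is \emph{not} available here: near the junction $\overline{\Sigma_1}\cap\overline{\Sigma_2}$ neither $u$ nor $\overline{\xi_0}$ is known to be comparable to $d(\cdot,\Sigma_1)^s$ or to any explicit profile, and there is no boundary regularity theory for the mixed problem that would let you calibrate a subsolution with the two required behaviours while controlling the nonlocal contribution from all of $\Sigma_2$. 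This is exactly the obstruction recorded in Remark \ref{comparison}: comparison with the Dirichlet problem gives $u\geq C\bigl(\int_\Omega f\delta^s\bigr)\delta^s$, yet this does not imply the statement because $\overline{\xi_0}$ is not controlled from above by $\delta^s$ near the interface. A secondary issue: your set $\Omega_\delta=\{x\in\Omega: d(x,\Sigma_1)\geq\delta\}$ is not compactly contained in $\Omega$ (it reaches $\overline{\Sigma_2}$), so the interior Harnack inequality does not give uniform positive lower bounds for $G(x,\cdot)$ or $\overline{\xi_0}$ there; one would need a Harnack estimate up to the Neumann portion of the boundary, which is again nontrivial.

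The paper avoids barriers altogether. Following D\'avila's local argument, it proves a weighted Sobolev inequality with weight $u$ (Lemma \ref{Sob}), feeds it into a Stampacchia--Moser iteration to obtain the global quotient bound $\|v/u\|_{L^\infty(\Omega)}\leq C\|g\|_{L^p(\Omega)}$, $p>N/s$, for any solution $v$ of the mixed problem with datum $g$ (Lemma \ref{aux_elliptic}), and then combines an interior lower bound for $u$ on a compact set $K$ (via the fundamental solution) with a comparison argument in $\Omega\setminus K$ and an application of Lemma \ref{aux_elliptic} with $g=1$. To salvage your route you would have to prove your Hopf-type lemma by some such integral method rather than by a barrier, at which point the Green's function detour buys nothing.
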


\begin{theorem}\label{main_parabolic}
Let $u$ be the solution to \eqref{probP} with $u_0\in L^2(\Omega)$, $u_0\geq 0$, and let $\overline{\xi_0}$ solve \eqref{probV}. Then,
\begin{equation}\label{estimacion_parabolica}
u(x,t)\geq c(t)\left(\int_\Omega u_0(y)\overline{\xi_0}(y)\,dy\right)\overline{\xi_0}(x),\qquad (x,t)\in\Omega\times (0,+\infty),
\end{equation}
where $c(t)$ depends on $N$, $s$, $\Omega$, $\Sigma_1$ and $\Sigma_2$, {and is positive for $t>0$}.
\end{theorem}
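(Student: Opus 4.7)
My approach parallels the elliptic strategy of Theorem \ref{main_elliptic} but combines it with the spectral theory of the mixed problem developed in \cite{DRV}. Let $\{(\lambda_k,\phi_k)\}_{k\geq 1}$ be the $L^2(\Omega)$-orthonormal eigenpairs of $(-\Delta)^s$ with the boundary data of \eqref{sigmas}, so that $0<\lambda_1<\lambda_2\leq\cdots$ and $\phi_1>0$ in $\Omega$. First I would show that the principal eigenfunction is comparable to the torsion function, $\phi_1\asymp\overline{\xi_0}$ in $\Omega$: the lower bound $\phi_1\geq c\,\overline{\xi_0}$ follows from Theorem \ref{main_elliptic} applied to $\phi_1$ (which satisfies $(-\Delta)^s\phi_1=\lambda_1\phi_1\geq 0$ with the same Dirichlet/Neumann data as $\overline{\xi_0}$, after a density argument to allow $\mathcal{C}_0^\infty$ right-hand sides), and the reverse bound $\phi_1\leq C\,\overline{\xi_0}$ is given by the weak comparison principle applied to $C\overline{\xi_0}-\phi_1$ for $C$ large. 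Second, testing \eqref{probP} against $\phi_1$ and using the nonlocal integration-by-parts formula of \cite{DRV} with the matching boundary conditions ($u=\phi_1=0$ on $\Sigma_1$ and $\mathcal{N}_s u=\mathcal{N}_s\phi_1=0$ on $\Sigma_2$) produces
\begin{equation*}
\frac{d}{dt}\int_\Omega u(x,t)\phi_1(x)\,dx=-\lambda_1\int_\Omega u(x,t)\phi_1(x)\,dx,
\end{equation*}
so $\int_\Omega u(x,t)\phi_1\,dx=e^{-\lambda_1 t}\int_\Omega u_0\phi_1\,dx$ for every $t\geq 0$.

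To upgrade this integral identity to the pointwise estimate \eqref{estimacion_parabolica}, I would use the semigroup identity $u(x,t)=\int_\Omega p_{t/2}(x,y)\,u(y,t/2)\,dy$ together with an intrinsic-ultracontractivity-type lower bound for the heat kernel of the mixed problem,
\begin{equation*}
p_\tau(x,y)\geq\kappa(\tau)\,\phi_1(x)\,\phi_1(y),\qquad x,y\in\Omega,\ \tau>0,
\end{equation*}
with $\kappa(\tau)>0$. This gives
\begin{equation*}
u(x,t)\geq\kappa(t/2)\,\phi_1(x)\int_\Omega\phi_1(y)\,u(y,t/2)\,dy=\kappa(t/2)\,e^{-\lambda_1 t/2}\,\phi_1(x)\int_\Omega u_0\phi_1\,dy,
\end{equation*}
after which the comparability $\phi_1\asymp\overline{\xi_0}$ yields \eqref{estimacion_parabolica} with $c(t)=c'\,\kappa(t/2)\,e^{-\lambda_1 t/2}>0$.

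The main obstacle is the kernel bound $p_\tau(x,y)\geq\kappa(\tau)\phi_1(x)\phi_1(y)$, which is the nonlocal analogue of Davies--Simon intrinsic ultracontractivity for the mixed problem. If a direct proof is out of reach, an equivalent barrier route is the following: at a fixed small $t_0>0$ establish a lower bound $u(\cdot,t_0)\geq\eta_0\phi_1$ in $\Omega$ with $\eta_0>0$ quantified in terms of $\int_\Omega u_0\phi_1$ via a parabolic Hopf/Harnack-type argument for the mixed fractional problem, and then propagate it by comparing $u(x,t)$ with $\eta_0 e^{-\lambda_1(t-t_0)}\phi_1(x)$, the latter being a solution of \eqref{probP} with the correct boundary data, so that the parabolic maximum principle yields $u(x,t)\geq\eta_0 e^{-\lambda_1(t-t_0)}\phi_1(x)$ for $t\geq t_0$. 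Either way, extracting a quantitative lower bound on the boundary decay of $u(\cdot,t)$ for positive times, with the correct dependence on $\int_\Omega u_0\overline{\xi_0}$, is the delicate step.
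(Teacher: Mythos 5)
Your reduction is sound as far as it goes, but it stops exactly where the theorem begins. The kernel bound $p_\tau(x,y)\geq\kappa(\tau)\phi_1(x)\phi_1(y)$ that you flag as ``the main obstacle'' is not an auxiliary fact you can import: formally taking $u_0=\delta_y$ in \eqref{estimacion_parabolica} and using $\phi_1\asymp\overline{\xi_0}$ shows that this intrinsic-ultracontractivity lower bound is \emph{equivalent} to the statement being proved. The same applies to your fallback route: a quantitative bound $u(\cdot,t_0)\geq\eta_0\phi_1$ with $\eta_0\gtrsim\int_\Omega u_0\phi_1$ is again essentially the theorem itself (the subsequent propagation by the barrier $\eta_0e^{-\lambda_1(t-t_0)}\phi_1$ is the easy part and presupposes the hard one). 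Your step 1 (the comparability $\phi_1\asymp\overline{\xi_0}$, which is Proposition \ref{bounds}) and the spectral identity $\int_\Omega u(t)\phi_1=e^{-\lambda_1t}\int_\Omega u_0\phi_1$ are fine, but the proposal as written is a correct reformulation, not a proof.

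What is actually needed for the missing step, and what the paper supplies, is a two-stage argument. First, Proposition \ref{Martel} (Martel's semigroup trick, using the Hopf lemma and heat-kernel estimates for the \emph{Dirichlet} fractional problem from \cite{CKS,bonforte} together with comparison between the mixed and Dirichlet semigroups) yields the quantitative but weaker bound $u(x,t)\geq c(t)\bigl(\int_\Omega u_0\delta^s\bigr)\delta^s(x)$; note that $\delta^s\lesssim\overline{\xi_0}$ but not conversely near $\Sigma_2$, so this is not yet \eqref{estimacion_parabolica}. Second, Theorem \ref{key_parabolico} upgrades any bound of the form $u(t)\geq c\,\delta^s$ to $u(t)\geq c(t)\overline{\xi_0}$ by a Moser-type iteration on $w=e^{-\lambda_1t}\overline{\chi}_1/u$: one shows $\theta_{2j}'\leq 0$ for $\theta_j(t)=\int_\Omega u^2w^j\,dx$, combines this with the weighted Sobolev inequality of Lemma \ref{Sob} (whose constant is uniform in $t$ precisely because $u(t)\geq c\delta^s$ on $[0,T]$), and deduces $\sup_\Omega w(\cdot,t)\leq Ct^{-1/2\gamma}$. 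That iteration is the genuinely new ingredient your proposal lacks; without it, or an independent proof of intrinsic ultracontractivity for the mixed nonlocal semigroup, your argument does not close.
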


The rest of the paper is organized as follows: in Section 2 we give some preliminaries related to the functional framework associated to problems \eqref{prob}-\eqref{probP} and we introduce the notion of solutions that will be used along the work. Section 3 deals with the proof of Theorem \ref{main_elliptic}. Finally in Section 4 we obtain the proof of Theorem \ref{main_parabolic}. 

We remark here that along the work we will denote by $C$ a positive constant that may change from line to line.

\section{Functional Setting and main results}

Let $u,v:\R^N\rightarrow \R$ be measurable functions and denote $Q:=\R^{2N}\setminus (\mathcal{C}\Omega)^2$. Consider the scalar product
\begin{equation}\label{scalprod}
\langle u,v\rangle_{E_{\Sigma_1}^s}:=\int_\Omega uv\,dx+\iint_Q\frac{(u(x)-u(y))(v(x)-v(y))}{|x-y|^{N+2s}}\,dxdy,
\end{equation}
and the associated norm
\begin{equation*}
\|u\|_{E_{\Sigma_1}^s}^2:=\int_\Omega u^2\,dx+\iint_Q\frac{|u(x)-u(y)|^2}{|x-y|^{N+2s}}\,dxdy.
\end{equation*}
Thus, we define the space
\begin{equation*}
E_{\Sigma_1}^s:=\{u:\R^N\rightarrow \R\mbox{ measurable s.t. }\|u\|_{E_{\Sigma_1}^s}<+\infty\mbox{ and }u=0\mbox{ in }\Sigma_1\}.
\end{equation*}

\begin{prop}
$E_{\Sigma_1}^s$ is a Hilbert space with the scalar product defined in \eqref{scalprod}.
\end{prop}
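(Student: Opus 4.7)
My plan proceeds in two stages: first confirm that $\langle\cdot,\cdot\rangle_{E_{\Sigma_1}^s}$ is a genuine inner product, then establish completeness by isometric embedding into a standard Hilbert space. Bilinearity and symmetry are immediate from \eqref{scalprod}. The only delicate axiom is positive definiteness: if $\|u\|_{E_{\Sigma_1}^s}=0$, then $\int_\Omega u^2\,dx=0$ forces $u=0$ a.e.\ in $\Omega$, and $u=0$ in $\Sigma_1$ by definition of the space; the vanishing of the Gagliardo contribution on $Q=\R^{2N}\setminus(\mathcal{C}\Omega)^2$ gives $u(x)=u(y)$ for a.e.\ $(x,y)\in Q$, and restricting to the slab $\Sigma_2\times\Omega\subset Q$ (on which $u(y)=0$) forces $u=0$ a.e.\ in $\Sigma_2$, hence $u\equiv 0$ in $\R^N$.

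For completeness I introduce the ambient Hilbert space
\[\mathcal{H}:=L^2(\Omega)\oplus L^2(Q,d\mu),\qquad d\mu:=\frac{dx\,dy}{|x-y|^{N+2s}},\]
and the map $T:E_{\Sigma_1}^s\to\mathcal{H}$ defined by $T(u):=(u|_\Omega,\,u(x)-u(y))$. By construction $\|T(u)\|_{\mathcal{H}}=\|u\|_{E_{\Sigma_1}^s}$, and injectivity of $T$ follows from the argument of the previous paragraph, so $T$ is an isometric embedding. Given a Cauchy sequence $\{u_n\}\subset E_{\Sigma_1}^s$, the image $\{T(u_n)\}$ converges in $\mathcal{H}$ to some pair $(v,g)$, and the task reduces to producing $\tilde u\in E_{\Sigma_1}^s$ with $T(\tilde u)=(v,g)$; the norm convergence $u_n\to\tilde u$ is then automatic.

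The construction of $\tilde u$ is the technical core. After extracting a subsequence (not relabelled), one may assume $u_n\to v$ pointwise a.e.\ in $\Omega$ and $u_n(x)-u_n(y)\to g(x,y)$ pointwise a.e.\ on $Q$. By Fubini I select a reference point $y_0\in\Omega$ for which simultaneously $u_n(y_0)\to v(y_0)$ and, for a.e.\ $x\in\R^N$, $u_n(x)-u_n(y_0)\to g(x,y_0)$; this is legitimate because the slice $\{x:(x,y_0)\in Q\}$ equals all of $\R^N$ whenever $y_0\in\Omega$. Adding these two limits yields a pointwise a.e.\ limit $\tilde u(x):=g(x,y_0)+v(y_0)$ of $\{u_n\}$ on $\R^N$, which coincides with $v$ on $\Omega$ and vanishes on $\Sigma_1$. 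Uniqueness of a.e.\ limits then gives $\tilde u(x)-\tilde u(y)=g(x,y)$ on $Q$, so $T(\tilde u)=(v,g)$ and hence $\tilde u\in E_{\Sigma_1}^s$. The main obstacle is precisely the absence of any direct $L^2(\Sigma_2)$ control in the norm: pointwise values on $\Sigma_2$ must be recovered only through the nonlocal coupling with $\Omega$ encoded by the Gagliardo seminorm, and it is the choice of the reference point $y_0\in\Omega$ that turns this coupling into a usable pointwise identity for $\tilde u$.
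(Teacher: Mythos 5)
Your proof is correct. The paper states this proposition without any proof, so there is nothing to compare against; your argument is the natural one and it is complete. The two points that actually require care are exactly the ones you isolate: (i) positive definiteness, where vanishing of the Gagliardo term on the strip $\Sigma_2\times\Omega\subset Q$ (rather than just on $\Omega\times\Omega$) is what kills $u$ on $\Sigma_2$; and (ii) completeness, where the limit function must be reconstructed on $\R^N\setminus\Omega$ from the limit of the differences, since the norm carries no $L^2(\Sigma_2)$ information. Your device of fixing a reference point $y_0\in\Omega$, legitimate by Fubini because the slice $\{x:(x,y_0)\in Q\}$ is all of $\R^N$, handles (ii) cleanly; the only implicit step worth a half-sentence is that convergence in $L^2(Q,d\mu)$ yields a subsequence converging pointwise $\mu$-a.e., hence Lebesgue-a.e.\ on $Q$ since $\mu$ and Lebesgue measure are mutually absolutely continuous there, which is what the Fubini slicing uses.
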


\begin{definition}
Let $f\in L^2(\Omega)$. We say that $u\in E_{\Sigma_1}^s$ is a weak solution of \eqref{prob} if
$$\frac{a_{N,s}}{2}\iint_Q\frac{(u(x)-u(y))(\varphi(x)-\varphi(y))}{|x-y|^{N+2s}}\,dxdy = \int_\Omega f\varphi\,dx,
$$
for every $\varphi\in E_{\Sigma_1}^s$.
\end{definition}

\begin{remark}
Notice that the domain of integration in the left hand side naturally arises from the problem, even when $u$ does not vanish in the whole $\R^N\setminus\Omega$. Indeed, multiplying in \eqref{prob} by a smooth function $\varphi\in E_{\Sigma_1}^s$ and integrating in $\Omega$ we get
\begin{eqnarray*}
&&\frac{a_{N,s}}{2}\iint_Q\frac{(u(x)-u(y))(\varphi(x)-\varphi(y))}{|x-y|^{N+2s}}\,dxdy\\
&=& \int_{\Omega}{\varphi(-\Delta)^su\, dx}+\int_{\mathbb{R}^{N}\setminus\Omega}{\varphi \mathcal{N}_su\, dx}\\
&=&\int_\Omega f\varphi\,dx.
\end{eqnarray*}
\end{remark}

\noindent We can also establish a Poincar\'e type inequality for this space with mixed conditions.

\begin{prop}\label{poincare}(Poincar\'e inequality) There exists a constant $C=C(\Omega,N,s)>0$ such that
$$\int_\Omega u^2\,dx\leq C\iint_Q\frac{|u(x)-u(y)|^2}{|x-y|^{N+2s}}\,dxdy,$$
for every $u\in E_{\Sigma_1}^s$. In particular, this implies the positivity of the first eigenvalue of the elliptic problem with zero mixed conditions, that is, $\lambda_1>0$ with
\begin{equation}\label{probEVm}
\left\{
\begin{aligned}
(-\Delta)^s \overline{\chi}_1&=\lambda_1 \overline{\chi}_1 &&  \mbox{in } \Omega,\\
\overline{\chi}_1&=0 && \mbox{in } \Sigma_1,\\
\mathcal{N}_s \overline{\chi}_1&=0 &&\mbox{in }\Sigma_2.
\end{aligned}\right.
\end{equation}
\end{prop}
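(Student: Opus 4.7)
The strategy is to exploit the vanishing condition $u=0$ on $\Sigma_1$, together with the fact that $\Sigma_1$ has positive measure, to bound the Gagliardo double integral below by the $L^2$ norm on $\Omega$ in one direct pass. No compactness, eigenfunction expansion, or approximation argument is needed.

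First I would observe that $Q=\R^{2N}\setminus(\mathcal{C}\Omega)^2$ contains the product region $\Omega\times(\R^N\setminus\Omega)$, and in particular $\Omega\times\Sigma_1\subset Q$. Restricting the double integral to this subset and using $u(y)=0$ for $y\in\Sigma_1$, one obtains the pointwise-in-$x$ lower bound
$$\iint_Q\frac{|u(x)-u(y)|^2}{|x-y|^{N+2s}}\,dx\,dy\;\geq\; \int_\Omega |u(x)|^2\left(\int_{\Sigma_1}\frac{dy}{|x-y|^{N+2s}}\right) dx.$$

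The next step is to show that the inner integral is bounded below by a positive constant uniformly for $x\in\Omega$. Since $\Sigma_1$ has positive measure, I would pick a bounded subset $K\subset\Sigma_1$ with $|K|>0$ (for instance $K=\Sigma_1\cap B_R$ for $R$ sufficiently large). Setting $D:=\mathrm{diam}(\overline\Omega\cup\overline K)<\infty$, one has $|x-y|\le D$ for every $x\in\Omega$ and $y\in K$, hence
$$\int_{\Sigma_1}\frac{dy}{|x-y|^{N+2s}}\;\geq\;\int_K\frac{dy}{|x-y|^{N+2s}}\;\geq\;\frac{|K|}{D^{N+2s}}\;=:\;c_0\;>\;0.$$
Combining the two displays yields the Poincar\'e inequality with $C=1/c_0$, a constant depending only on $N$, $s$, $\Omega$ and $\Sigma_1$.

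For the positivity of $\lambda_1$, the variational characterization associated to \eqref{probEVm} gives
$$\lambda_1=\inf_{u\in E_{\Sigma_1}^s\setminus\{0\}}\frac{\frac{a_{N,s}}{2}\displaystyle\iint_Q\frac{|u(x)-u(y)|^2}{|x-y|^{N+2s}}\,dx\,dy}{\displaystyle\int_\Omega u^2\,dx},$$
and the Poincar\'e inequality just proved immediately forces $\lambda_1\ge a_{N,s}/(2C)>0$. There is essentially no serious obstacle; the only mild subtlety is that $\Sigma_1$ itself may be unbounded, which is precisely why one first passes to the bounded truncation $K$ in order to secure a uniform upper bound on $|x-y|$ and thus a uniform lower bound on the kernel average.
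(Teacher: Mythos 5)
Your argument is correct, but it is genuinely different from the one in the paper. You prove the inequality directly: since $\Omega\times\Sigma_1\subset Q$ and $u\equiv 0$ on $\Sigma_1$, restricting the (nonnegative) Gagliardo integrand to that region gives $\iint_Q\frac{|u(x)-u(y)|^2}{|x-y|^{N+2s}}\,dxdy\geq\int_\Omega u^2(x)\bigl(\int_{\Sigma_1}|x-y|^{-N-2s}\,dy\bigr)dx$, and the inner integral is bounded below uniformly on $\Omega$ once you truncate $\Sigma_1$ to a bounded piece $K$ of positive measure, since the kernel is bounded below by $D^{-N-2s}$ at bounded distance. All steps check out (Tonelli applies because everything is nonnegative, and the nonempty open set $\Sigma_1$ indeed contains such a $K$). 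The paper instead argues by contradiction: it takes a normalized minimizing sequence for the Rayleigh quotient, invokes the compact embedding of $E_{\Sigma_1}^s$ into $L^2(\Omega)$ and weak convergence to extract a limit with vanishing seminorm, concludes that limit is constant on $\R^N$, hence zero because it vanishes on $\Sigma_1$, contradicting the $L^2$ normalization. Your route buys an explicit, quantitative constant $C=D^{N+2s}/|K|$ and avoids compactness entirely; the paper's route is softer and non-quantitative but isolates the structural fact (zero seminorm forces constancy) that the authors reuse right after the proposition. Two minor remarks: your constant visibly depends on $\Sigma_1$ as well as on $\Omega$, $N$, $s$ (this dependence is implicit in the paper's statement too, since the space itself depends on $\Sigma_1$), and your deduction $\lambda_1\geq a_{N,s}/(2C)$ correctly accounts for the normalization $[u]_{X^s(\Omega)}^2=\frac{a_{N,s}}{2}\iint_Q$ used in the paper's variational characterization.
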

\begin{proof}
Let us denote
$$\langle\varphi,\phi\rangle_{X^s(\Omega)}:=\frac{a_{N,s}}{2}\iint_Q\frac{(\varphi(x)-\varphi(y))(\phi(x)-\phi(y))}{|x-y|^{N+2s}},$$
and $[\varphi]_{X^s(\Omega)}:=\langle \varphi,\varphi\rangle_{X^s(\Omega)}^{1/2},$ where $\varphi,\phi\in E_{\Sigma_1}^s$. 

With this notation, it is clear that $\|\cdot\|_{E_{\Sigma_1}^s}^2=\|\cdot\|_{L^2(\Omega)}^2+[\cdot]_{X^s(\Omega)}^2$, and thus, we want to prove that
\begin{equation*}\begin{split}
\lambda_1:=&\displaystyle\inf_{u\in E_{\Sigma_1}^s}\frac{[u]_{X^s(\Omega)}^2}{\|u\|_{L^2(\Omega)}^2}=\inf_{u\in E_{\Sigma_1}^s,\, \|u\|_{L^2(\Omega)}=1}[u]_{X^s(\Omega)}^2>0.
\end{split}\end{equation*}
We proceed by contradiction. Suppose $\lambda_1=0$. Hence, one can find a sequence $\{u_k\}_{k\in\N}\in E_{\Sigma_1}^s$ such that
\begin{equation}\label{contradP}
\|u_k\|_{L^2(\Omega)}=1, \qquad [u_k]_{X^s(\Omega)}\rightarrow 0\mbox{ as }k\rightarrow +\infty.
\end{equation}
In particular, for $k$ large enough there exists a constant such that $\|u_k\|_{E_{\Sigma_1}^s}\leq C$. Thus, by \cite[Theorem 7.1]{guida},
$$u_k\rightharpoonup u \mbox{ in }E_{\Sigma_1}^s,\qquad u_k\rightarrow u \mbox{ in }L^2(\Omega),$$
and hence $\langle u_k-u,\varphi\rangle_{X^s(\Omega)}\rightarrow 0$ for every $\varphi\in E_{\Sigma_1}^s$ and
\begin{equation}\label{normaL2}
\|u\|_{L^2(\Omega)}=1.
\end{equation}
Taking $\varphi=u$ we obtain
$$\langle u_k-u,u\rangle_{X^s(\Omega)}\rightarrow 0, \mbox{ i.e. }\langle u_k,u\rangle_{X^s(\Omega)}\rightarrow \langle u,u\rangle_{X^s(\Omega)}.$$
Therefore, by the Cauchy-Schwartz inequality and \eqref{contradP}
$$
[u]_{X^s(\Omega)}^2=\lim_{k\rightarrow\infty}\langle u_k,u\rangle_{X^s(\Omega)}\leq [u]_{X^s(\Omega)}\left(\lim_{k\rightarrow\infty} [u_k]_{X^s(\Omega)}\right)=0.
$$
Thus according to the definition of $[\cdot]_{X^s(\Omega)}$, this implies that $u$ is constant in $\R^N$. But we know that $u=0$ in $\Sigma_1$, and hence $u=0$ in the whole $\R^N$, which contradicts  \eqref{normaL2}.
\end{proof}

It worths to point out here that the seminorm given by the double integral 
\begin{equation}\label{doubleInt}
\iint_Q\frac{|u(x)-u(y)|}{|x-y|^{N+2s}}\,dxdy,
\end{equation}
is actually a norm when we impose zero mixed boundary conditions. Indeed, as we used in the previous proof, if this integral vanishes then necessarily $u$ has to be constant in the whole $\R^N$ and, since we know that $u=0$ in $\Sigma_1$, we conclude that it vanishes a.e. in $\R^N$. In this sense, our boundary conditions behave as Dirichlet conditions and, thanks to Proposition \ref{poincare}, we have the {analogous} between the norms $\|\cdot\|_{E_{\Sigma_1}^s}$ and $\|\cdot\|_{H_0^s(\Omega)}$, defined as the double integral given in \eqref{doubleInt}, when one imposes $u=0$ in $\R^N\setminus\Omega$ to the functions in $H^s(\Omega)$. See for instance \cite{guida} for more details about these fractional Sobolev spaces.

As a consequence of Proposition \ref{poincare}, the coercivity of the operator in $E_{\Sigma_1}^s$ holds and Lax-Milgram theorem can be applied to guarantee the existence and uniqueness of solution of \eqref{prob} when $f\in L^2(\Omega)$. Likewise, one can assure the solvability of \eqref{probP} when $u_0\in L^2(\Omega)$. Moreover, to be consistent with the notation and the concept of solution introduced by J. D\'avila and L. Dupaigne in \cite{DD}, we will use the notion of analytic semigroup to give the precise definition of solutions to problem \eqref{probP}.

\begin{definition}
Let $\{S(t)\}_{t\geq 0}$ be the analytic semigroup in $L^{2}(\Omega)$ for the heat fractional equation with mixed boundary conditions. Then for every $u_{0}\in L^{2}(\Omega)$ there exists a unique
$$u:=S(t)u_0\in\mathcal{C}([0,\infty); L^{2}(\Omega))\cap \mathcal{C}((0,\infty); E_{\Sigma_1}^s)\cap \mathcal{C}^1((0,\infty); L^{2}(\Omega)),$$
solving \eqref{probP}. In particular, $u$ satisfies 
$$\int_\Omega\!\!u_t(x,\tau)\varphi(x,\tau)\,dx+\frac{a_{N,s}}{2}\!\!\iint_Q\frac{(u(x,\tau)-u(y,\tau))(\varphi(x,\tau)-\varphi(y,\tau))}{|x-y|^{N+2s}}\,dxdy=0$$
for every $\tau>0$ and $\varphi\in \mathcal{C}((0,\infty); E_{\Sigma_1}^s)$.
\end{definition}

Notice that the regularity properties follow in a standard way from the hilbertian structure of the space $E_{\Sigma_1}^s$ (see for instance \cite[Theorem 2.3 and Proposition 2.20]{anibal} or \cite[$\S$5.9.2, Theorem 3]{Ev}).

\begin{remark}\label{comparison}
Comparison results can be proved in both elliptic and parabolic cases with standard arguments, so we omit the proofs and the precise statements, but they will be often used along the work. Furthermore, we will frequently use the fact that if $u$ is a positive solution to a mixed problem, and $\tilde{u}$ is a positive solution to the analogous Dirichlet problem, then necessarily $u\geq \tilde{u}$, which follows straightforward from the comparison results for Dirichlet problems. This in particular implies that (see \cite{bmp})
$$u(z)\geq C\left(\int_{\Omega}{f(x)\delta^{s}(x)\, dx}\right)\delta^{s}(z),\, C>0,$$
{where $\delta^s(x):=\mbox{dist}(x,\partial\Omega)$}. It is worthy to mention here that, as far as we know, by the lack of regularity the previous inequality does not directly imply the statement of Theorem \ref{main_elliptic} as occurs in the case of zero Dirichlet condition.

\end{remark}

Finally, along this work we will need to make use of the following Hardy-type inequality, that can be found in, for example, \cite{hardy}.
\begin{prop}\label{HardyIneq}
There exists a constant $C=C(N,s)>0$ such that for every $\varphi\in H_0^s(\Omega)$ the following inequality holds,
$$\int_\Omega\frac{\varphi^2(x)}{\delta^{2s}(x)}\,dx\leq C\iint_Q\frac{(\varphi(x)-\varphi(y))^2}{|x-y|^{N+2s}}\,dxdy,$$
where $\delta(x)=\textrm{dist}(x,\partial\Omega)$ denotes the Euclidean distance in $\R^N$ to the boundary.
\end{prop}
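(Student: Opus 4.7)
The plan is to prove this as the standard fractional Hardy inequality on a bounded Lipschitz domain, via a Frank--Seiringer type ground-state representation. By density of $C_c^\infty(\Omega)$ in $H_0^s(\Omega)$, it suffices to establish the inequality for $\varphi\in C_c^\infty(\Omega)$.

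I would first construct a positive barrier $\omega:\R^N\to(0,\infty)$, comparable to $\delta^s$ in a neighbourhood of $\partial\Omega$ and bounded away from $0$ and $\infty$ on compact subsets of $\Omega$, satisfying the pointwise lower bound
\[
(-\Delta)^s \omega(x)\;\geq\; c\,\frac{\omega(x)}{\delta^{2s}(x)},\qquad x\in\Omega,
\]
for some $c=c(N,s,\Omega)>0$. Such a $\omega$ is obtained by flattening $\partial\Omega$ locally with a bi-Lipschitz map, pulling back the explicit half-space profile that is used to prove the sharp fractional Hardy inequality, and gluing the local pieces with a smooth partition of unity (the cut-offs producing only error terms controlled on compact subsets of $\Omega$, where all quantities are harmless).

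Given such $\omega$, I write $\varphi=\omega\psi$ with $\psi=\varphi/\omega$ (smooth and compactly supported in $\Omega$, since $\varphi\in C_c^\infty(\Omega)$ and $\omega>0$ on $\overline{\mathrm{supp}\,\varphi}$) and invoke the algebraic identity
\[
\tfrac{a_{N,s}}{2}\iint_Q \frac{(\varphi(x)-\varphi(y))^2}{|x-y|^{N+2s}}\,dxdy \;=\; \int_\Omega \psi^2\,\omega\,(-\Delta)^s\omega\,dx \;+\; \tfrac{a_{N,s}}{2}\iint \frac{(\psi(x)-\psi(y))^2\omega(x)\omega(y)}{|x-y|^{N+2s}}\,dxdy,
\]
which follows by symmetrizing the expansion of $(\omega(x)\psi(x)-\omega(y)\psi(y))^2$ against the symmetric kernel $|x-y|^{-N-2s}$. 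Dropping the nonnegative second term on the right and inserting the lower bound on $(-\Delta)^s\omega$ yields
\[
\tfrac{a_{N,s}}{2}\iint_Q \frac{(\varphi(x)-\varphi(y))^2}{|x-y|^{N+2s}}\,dxdy \;\geq\; c\int_\Omega \psi^2\,\omega^2\,\delta^{-2s}\,dx \;=\; c\int_\Omega \frac{\varphi^2}{\delta^{2s}}\,dx,
\]
which, after absorbing $a_{N,s}/2$ into the constant, is the desired inequality.

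The main obstacle is the construction of $\omega$ and, in particular, the verification of $(-\Delta)^s\omega\geq c\omega/\delta^{2s}$. The naive choice $\omega=\delta^s\chi_\Omega$ is insufficient because its half-space analogue $(x_N)_+^s$ is actually $s$-harmonic in the upper half-space, so no $\delta^{-2s}$ gain is available from a bare power-of-distance weight. One must use a more delicate profile that combines the half-space barrier with a correction producing the required $\delta^{-2s}$ singularity; this ultimately rests on the explicit computation of $(-\Delta)^s$ on a model function in the half-space, where the sharp constant of the fractional Hardy inequality is known.
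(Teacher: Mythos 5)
A preliminary remark: the paper offers no proof of this proposition at all --- it is quoted directly from \cite{hardy} --- so your argument must be judged on its own. Your ground-state strategy is a recognized route to fractional Hardy inequalities, and the algebraic identity you invoke is correct (it is the same identity the authors use in Lemma \ref{Sob}); but the step you defer, the construction of $\omega$, is precisely where the whole proof lives, and as described it does not go through. Two concrete problems. First, a barrier comparable to $\delta^s$ cannot satisfy $(-\Delta)^s\omega\geq c\,\omega\,\delta^{-2s}$: that would force $(-\Delta)^s\omega\geq c\,\delta^{-s}$, whereas for the model profiles $\delta^\alpha$ one has $(-\Delta)^s\delta^\alpha\sim c_\alpha\,\delta^{\alpha-2s}$ with $c_s=0$; the ground state of the sharp half-space inequality is $\delta^{\,s-1/2}$, not $\delta^s$, and its constant $c_{s-1/2}$ vanishes at $s=1/2$, so your route degenerates entirely at $s=1/2$. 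Second, the ``flatten with a bi-Lipschitz map and pull back the half-space profile'' step is not available for a nonlocal operator: a bi-Lipschitz change of variables turns the kernel $|x-y|^{-N-2s}$ into a merely comparable kernel, and pointwise values of $(-\Delta)^s$ are not monotone under kernel comparability (the integrand changes sign), so the pointwise differential inequality does not transfer from the half-space to $\Omega$. What survives such arguments is comparability of quadratic forms, not of $(-\Delta)^s\omega(x)$.

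The deeper issue is that you are attacking a much harder inequality than the one asserted. Here the seminorm is taken over $Q=\R^{2N}\setminus(\R^N\setminus\Omega)^2$, which contains $\Omega\times(\R^N\setminus\Omega)$, and $\varphi\in H_0^s(\Omega)$ vanishes outside $\Omega$. Discarding the $\Omega\times\Omega$ contribution,
\[
\iint_Q\frac{(\varphi(x)-\varphi(y))^2}{|x-y|^{N+2s}}\,dxdy\;\geq\;2\int_\Omega\varphi(x)^2\Bigl(\int_{\R^N\setminus\Omega}\frac{dy}{|x-y|^{N+2s}}\Bigr)dx,
\]
and for any domain whose complement satisfies a measure-density (exterior corkscrew) condition --- in particular a bounded Lipschitz domain --- one has
\[
\int_{\R^N\setminus\Omega}\frac{dy}{|x-y|^{N+2s}}\;\geq\;\bigl(2\delta(x)\bigr)^{-N-2s}\,\bigl|B(x,2\delta(x))\setminus\Omega\bigr|\;\geq\;c\,\delta(x)^{-2s}.
\]
This proves the proposition in three lines, for every $s\in(0,1)$ including $s=1/2$, with no density argument, no barrier and no sharp constants (note, though, that the constant inevitably depends on $\Omega$ as well, not only on $N$ and $s$ as the statement suggests). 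The ground-state machinery you set up is what one needs for the regional inequality, with the seminorm over $\Omega\times\Omega$ only; that inequality is genuinely delicate, holds only for $s>1/2$, and is not what is being claimed here.
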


\section{Elliptic Maximum Principle}
The aim of this section is to prove Theorem \ref{main_elliptic} but, before that, we will need some auxiliary results as the following one that can be seen as a kind of weighted Sobolev inequality:
\begin{lemma}\label{Sob}
Let $u$ be the solution of \eqref{prob} with $f\in L^\infty(\Omega)$, $f\gneq 0$. Then, there exists $C>0$ such that for every $\varphi\in E_{\Sigma_1}^s$
$$\left(\int_\Omega u^r|\varphi|^q\,dx\right)^{1/q}\leq C\left(\!\iint_Q u(x)u(y)\frac{(\varphi(x)-\varphi(y))^2}{|x-y|^{N+2s}}\,dxdy +\int_\Omega u^2\varphi^2\,dx\right)^{\!1/2},$$
where $0\leq r\leq 2^*_s$, $\frac{q}{2}=1+r\frac{s}{N}$ and the constant $C$ depends on $\Omega$, $N$, $s$, $\|u\|_{L^\infty(\Omega)}$, $\|f\|_{L^\infty(\Omega)}$ and $1/(\int_\Omega f(y)\delta^s(y)\,dy)$.

\end{lemma}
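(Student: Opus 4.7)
The strategy rests on the pointwise algebraic identity
\[
(u\varphi(x)-u\varphi(y))^2 = (u(x)-u(y))(u\varphi^2(x)-u\varphi^2(y)) + u(x)u(y)(\varphi(x)-\varphi(y))^2,
\]
which is verified by direct expansion. Multiplying by $|x-y|^{-N-2s}$, integrating over $Q$, and using $u\varphi^2$ as a test function in the weak formulation of \eqref{prob} (which is admissible because $\varphi=0$ on $\Sigma_1$ forces $u\varphi^2=0$ there, while $u\in L^\infty(\Omega)$ supplies the required integrability of the Gagliardo seminorm) yields
\[
[u\varphi]^2_{X^s(\Omega)} = \int_\Omega fu\varphi^2\,dx + \frac{a_{N,s}}{2}\iint_Q u(x)u(y)\frac{(\varphi(x)-\varphi(y))^2}{|x-y|^{N+2s}}\,dxdy.
\]

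The next step is to dominate $\int_\Omega fu\varphi^2$ by $\int_\Omega u^2\varphi^2$. Since $f\in\mathcal{C}_0^\infty(\Omega)$, pick $\eta>0$ with $\mathrm{supp}\,f\subset\{\delta\ge\eta\}$; Remark~\ref{comparison} then yields the Hopf-type lower bound $u\ge c_0\eta^s\int_\Omega f\delta^s=:\gamma>0$ on $\mathrm{supp}\,f$, so $f\le(\|f\|_\infty/\gamma)u$ there, and $\int_\Omega fu\varphi^2\le(\|f\|_\infty/\gamma)\int_\Omega u^2\varphi^2$. Denoting the two integrals on the right-hand side of the Lemma by $A$ and $B$, the identity above becomes $[u\varphi]^2_{X^s(\Omega)}\le C(A+B)$ with the advertised dependence of $C$. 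Since $u\varphi$ vanishes on $\Sigma_1$ it belongs to $E_{\Sigma_1}^s$, and the fractional Sobolev embedding $E_{\Sigma_1}^s\hookrightarrow L^{2^*_s}(\Omega)$ delivers the endpoint of the Lemma:
\[
\|u\varphi\|^2_{L^{2^*_s}(\Omega)}\le C(A+B).
\]

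For the intermediate range $r\in[0,2^*_s)$, the prescribed relation $q/2=1+rs/N$ is exactly the H\"older conjugacy $r/2^*_s+(q-r)/2=1$; factoring $u^r|\varphi|^q=(u|\varphi|)^r|\varphi|^{q-r}$ and applying H\"older's inequality gives
\[
\int_\Omega u^r|\varphi|^q\,dx \le \|u\varphi\|^r_{L^{2^*_s}(\Omega)}\|\varphi\|^{q-r}_{L^2(\Omega)}.
\]
Inserting the endpoint estimate reduces the Lemma to the bound $\|\varphi\|^2_{L^2(\Omega)}\le C(A+B)$, and this is where I expect the main technical obstacle to lie: because $u$ vanishes on $\Sigma_1$, both weights $u(x)u(y)$ and $u^2$ are small in the boundary layer adjacent to $\Sigma_1$, so a naive comparison against the unweighted Poincar\'e inequality (Proposition~\ref{poincare}) does not close. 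The resolution should exploit the non-local coupling in $A$: for $x$ in the boundary strip near $\Sigma_1$, integrating $y$ over the interior of $\Omega$ (where $u(y)\ge c_0\delta^s(y)$ is bounded below on compact subsets) still controls $\varphi(x)^2$ through the kernel $|x-y|^{-N-2s}$, while on $\{u\ge\alpha\}$ Chebyshev's inequality gives $\int_{\{u\ge\alpha\}}\varphi^2\le\alpha^{-2}B$; balancing these two contributions by an appropriate choice of $\alpha$ (depending on $\|u\|_{L^\infty}$ and $\int_\Omega f\delta^s$) completes the argument.
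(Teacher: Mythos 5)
Your overall architecture (the algebraic identity for $(u\varphi(x)-u\varphi(y))^2$, testing with $u\varphi^2$, Sobolev at the endpoint $r=2^*_s$, and H\"older interpolation down to general $r$) matches the paper's Steps 2 and 3. But the step you defer as ``the main technical obstacle'' --- the unweighted bound $\int_\Omega\varphi^2\,dx\leq C(A+B)$, i.e.\ the case $r=0$ --- is the actual content of the lemma, and your sketch of how to close it does not work. The nonlocal coupling you invoke only yields a degenerate estimate: for $x$ near $\Sigma_1$ the factor $u(x)$ in the weight $u(x)u(y)$ is itself of order $\delta^s(x)$, so integrating $y$ over a compact set $K$ where $u\geq c_K>0$ gives at best
$$\int_\Omega u(x)\varphi^2(x)\,dx\leq C\Bigl(A+\int_K\varphi^2\,dx\Bigr)\leq C(A+B),$$
which is the weighted inequality $\int_\Omega u\varphi^2\leq C(A+B)$, strictly weaker than $\int_\Omega\varphi^2\leq C(A+B)$ precisely because $u$ vanishes like $\delta^s$ at the boundary; no Chebyshev or balancing argument removes this weight. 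The paper's proof of the $r=0$ case uses a genuinely different ingredient: the fractional Hardy inequality (Proposition \ref{HardyIneq}) applied to $\varphi\chi_1$, where $\chi_1$ is the first \emph{Dirichlet} eigenfunction, which satisfies $c_1\delta^s\leq\chi_1\leq c_2\delta^s$. One writes $\int_\Omega\varphi^2\leq C\int_\Omega\varphi^2\chi_1^2/\delta^{2s}\leq C[\varphi\chi_1]^2_{X^s(\Omega)}$, splits the seminorm with the same algebraic identity you use, evaluates the cross term as $\lambda_1\int_\Omega\chi_1^2\varphi^2$ via the eigenvalue equation, and only then replaces $\chi_1$ by $u$ using the Hopf bound $u\geq C\delta^s$. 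You would need to supply this (or an equivalent) argument for your proof to be complete.

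Two smaller points. First, your bound $\int_\Omega fu\varphi^2\leq C\int_\Omega u^2\varphi^2$ assumes $\mathrm{supp}\,f$ is compactly contained in $\Omega$, but the lemma only assumes $f\in L^\infty(\Omega)$; the paper instead estimates $\int_\Omega fu\varphi^2\leq\|f\|_{L^\infty}\|u\|_{L^\infty}\int_\Omega\varphi^2$ and again falls back on the $r=0$ case, which is another reason that case cannot be postponed. Second, once $r=0$ is established, your H\"older interpolation with exponents $2^*_s/r$ and $2/(q-r)$ is correct and equivalent to the paper's Step 3.
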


\begin{remark} Lemma \ref{Sob} is crucial in the proofs of Theorem \ref{main_elliptic} and Theorem \ref{main_parabolic}, and can be seen as the non local version of \cite[Lemma 3]{D}. Notice that the term $\int_\Omega u^2|\nabla \varphi|^2\,dx$ appearing there is replaced in this case by the non local term
\begin{equation}\label{buuu}
\iint_Q u(x)u(y)\frac{(\varphi(x)-\varphi(y))^2}{|x-y|^{N+2s}}\,dxdy,
\end{equation}
whose precise form is way less clear than in the local case. Indeed, in the local problem this term naturally comes from testing in a problem where the main operator has the divergence form $-\mbox{div}(u^2|\nabla w|)$ for some concrete $w$. However, in the fractional case one cannot explicitly compute the problem satisfied by this $w$, so at the begining it is not evident at all how the estimate in Lemma \ref{Sob} has to be. During the proof it will become clear that \eqref{buuu} is the appropriate term in this case.
\end{remark}

\begin{proof}
We proceed as in the proof of \cite[Lemma 3]{D}, i.e., we first prove the inequality for $r=0$, then for $r=2^*_s$ and finally we interpolate to obtain the result.
\medskip

\noindent {\it Step 1:} Case $r=0$.

\noindent Let $\chi_1>0$ be the first eigenfunction of the fractional Laplacian with zero Dirichlet conditions, that is, the solution of
\begin{equation}\label{probEV}
\left\{
\begin{aligned}
(-\Delta)^s \chi_1&=\lambda_1\chi_1 &&  \mbox{in } \Omega,\\
\chi_1&=0 && \mbox{in } \mathbb{R}^N\setminus \Omega,
\end{aligned}\right.
\end{equation}
which, by the regularity result obtained in \cite[Proposition 1.1]{quim} and the Hopf's Lemma given in \cite[Lemma 3.2]{quim} (see also \cite[Proposition~2.7]{crs}), satisfies $c_1\delta^s\leq \chi_1\leq c_2\delta^s$ for positive constants $c_1$ and $c_2$, with $\delta(x):=\mbox{dist}(x,\partial\Omega)$. Thus by Proposition \ref{HardyIneq} and \eqref{probEV} we obtain
\begin{equation*}\begin{split}
\int_\Omega \varphi^2\,dx \leq&\, C\int_\Omega\frac{\varphi^2\chi_1^2}{\delta^{2s}}\,dx\leq C\iint_Q\frac{(\varphi(x)\chi_1(x)-\varphi(y)\chi_1(y))^2}{|x-y|^{N+2s}}\,dxdy\\
=&\,C\left(\iint_Q\frac{(\chi_1(x)-\chi_1(y))(\chi_1(x)\varphi^2(x)-\chi_1(y)\varphi^2(y))}{|x-y|^{N+2s}}\,dxdy\right.\\
&\left.+\iint_Q\chi_1(x)\chi_1(y)\frac{(\varphi(x)-\varphi(y))^2}{|x-y|^{N+2s}}\,dxdy\right)\\
=&\,C\left(\lambda_1\int_\Omega \chi_1^2\varphi^2\,dx+\iint_Q\chi_1(x)\chi_1(y)\frac{(\varphi(x)-\varphi(y))^2}{|x-y|^{N+2s}}\,dxdy\right)\\
\leq&\, C\left(\lambda_1\int_\Omega \delta^{2s}\varphi^2\,dx+\iint_Q\delta^s(x)\delta^s(y)\frac{(\varphi(x)-\varphi(y))^2}{|x-y|^{N+2s}}\,dxdy\right).
\end{split}\end{equation*}
Applying now that, by the Hopf's Lemma, $u\geq C\delta^s$, $C>0$, it follows that
\begin{equation}\label{r0}
\int_\Omega \varphi^2\,dx \leq C\left(\int_\Omega u^2\varphi^2\,dx+\iint_Qu(x)u(y)\frac{(\varphi(x)-\varphi(y))^2}{|x-y|^{N+2s}}\,dxdy\right),
\end{equation}
as wanted.
\medskip

\noindent {\it Step 2:} Case $r=2^*_s$.

\noindent Using the Sobolev inequality and the fact that $u$ solves \eqref{prob} it follows that
\begin{equation*}
\begin{split}
\left(\int_\Omega |u\varphi|^{2^*_s}\,dx\right)^{2/2^*_s}\leq&\, C\left(\iint_Q\frac{(u(x)\varphi(x)-u(y)\varphi(y))^2}{|x-y|^{N+2s}}\,dxdy+\int_\Omega{u^2\varphi^2\,dx}\right)\\
=&\,C\left(\iint_Q\frac{(u(x)-u(y))(u(x)\varphi^2(x)-u(y)\varphi^2(y))}{|x-y|^{N+2s}}\,dxdy\right.\\
&\left.+\iint_Qu(x)u(y)\frac{(\varphi(x)-\varphi(y))^2}{|x-y|^{N+2s}}\,dxdy+\int_\Omega u^2\varphi^2\,dx\right)\\
=&\,C\left(\int_\Omega fu\varphi^2\,dx+\iint_Qu(x)u(y)\frac{(\varphi(x)-\varphi(y))^2}{|x-y|^{N+2s}}\,dxdy\right.\\
&\left.+\int_\Omega u^2\varphi^2\,dx\right).
\end{split}\end{equation*}
Since by hypothesis $f\in L^\infty(\Omega)$ repeating verbatim the Moser's type proof done for fractional elliptic problems with zero boundary conditions (see \cite{Leonori} for the linear case and \cite{Stefano} for the nonlinear one) we get that $u\in L^\infty(\Omega)$. Thus, by the inequality \eqref{r0} obtained in Step 1 and the previous estimate it follows
\begin{equation}\label{r2}
\begin{split}
\left(\int_\Omega |u\varphi|^{2^*_s}\,dx\right)^{2/2^*_s}\leq&\, C\left(\int_\Omega \varphi^2\,dx+\iint_Qu(x)u(y)\frac{(\varphi(x)-\varphi(y))^2}{|x-y|^{N+2s}}\,dxdy\right.\\
&\left.+\int_\Omega u^2\varphi^2\,dx\right)\\
\leq &\, C\left(\iint_Qu(x)u(y)\frac{(\varphi(x)-\varphi(y))^2}{|x-y|^{N+2s}}\,dxdy+\int_\Omega u^2\varphi^2\,dx\right),
\end{split}
\end{equation}
and we conclude.
\medskip

\noindent {\it Step 3:} Interpolation.

\noindent Let be $0<\lambda<1$. By H\"older's inequality,
$$\int_\Omega u^r|\varphi|^q\,dx\leq \left(\int_\Omega \varphi^2\,dx\right)^{1-\lambda}\left(\int_\Omega u^{r/\lambda}|\varphi|^{(q-2(1-\lambda))/\lambda}\,dx\right)^\lambda.$$
Fixing now $\lambda$ so that 
$$\frac{r}{\lambda}=\frac{q-2(1-\lambda)}{\lambda}=2^*_s,$$
applying inequalities \eqref{r0} and \eqref{r2} we obtain that
$$\left(\int_\Omega u^r|\varphi| ^q\,dx\right)^{\!\!1/q}\!\leq C\!\left(\iint_Q u(x)u(y)\frac{(\varphi(x)-\varphi(y))^2}{|x-y|^{N+2s}}\,dxdy\!+\!\int_\Omega u^2\varphi^2\,dx\right)^{\!\!\frac{1-\lambda+\lambda \frac{2^*_s}{2}}{q}.}$$
Noticing that $(1-\lambda+\lambda \frac{2^*_s}{2})\frac{1}{q}=\frac{1}{2}$ we conclude.
\end{proof}

\begin{remark}
It can be easily seen that in the particular case of $u=\overline{\chi}_1$, the solution to \eqref{probEVm}, the proof simplifies and the constant depends only on $\Omega$, $N$, $s$ and $\lambda_1$.
\end{remark}

\noindent We introduce now the fundamental auxiliary result to prove Theorem \ref{main_elliptic}:

\begin{lemma}\label{aux_elliptic}
Let $v$ be the solution to he problem
\begin{equation}\label{probV2}
\left\{
\begin{aligned}
(-\Delta)^s v&=g &&  \mbox{in } \Omega,\\
v&=0 && \mbox{in } \Sigma_1,\\
\mathcal{N}_s  v&=0 &&\mbox{in }\Sigma_2,
\end{aligned}\right.
\end{equation}
with $g\in L^p(\Omega)$, $p>N/s$, and let $u$ be the solution of \eqref{prob} with $f\in L^\infty(\Omega)$, $f\gneq 0$. Then there exists a constant $C>0$ such that
$$\left\|\frac{v}{u}\right\|_{L^\infty(\Omega)}\leq C\|g\|_{L^p(\Omega)},$$
with $C$ depending on $\Omega$, $\Sigma_1$, $\Sigma_2$, $N$, $p$, $\|u\|_{L^\infty(\Omega)}$, $\|f\|_{L^\infty(\Omega)}$ and $\|f\delta^s\|^{-1}_{L^{1}(\Omega)}$.

\end{lemma}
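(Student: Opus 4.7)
The plan is to perform a Moser-type iteration on the ratio $w:=v/u$, driven by the weighted Sobolev inequality of Lemma~\ref{Sob}. By linearity $v=v_{1}-v_{2}$ with $v_{1},v_{2}\ge 0$ solving \eqref{probV2} with right-hand sides $g^{+}$ and $g^{-}$; estimating $v_{1}/u$ and $v_{2}/u$ separately reduces us to $v\ge 0$. Since $u\ge c\bigl(\int_{\Omega}f\delta^{s}\bigr)\delta^{s}>0$ in $\Omega$ by Remark~\ref{comparison}, the function $w$ is well defined and nonnegative (set $w\equiv 0$ on $\Sigma_{1}$).

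The heart of the argument is the following non-local product-rule identity. For $\beta>1/2$ write $\psi:=u\,w^{2\beta-1}$ and $\eta:=u\,w^{2\beta}=v\,w^{2\beta-1}$. A direct expansion of $\bigl(u(x)w(x)-u(y)w(y)\bigr)\bigl(u(x)w^{2\beta-1}(x)-u(y)w^{2\beta-1}(y)\bigr)$ gives
$$
(v(x)-v(y))(\psi(x)-\psi(y))=u(x)u(y)\bigl(w(x)-w(y)\bigr)\bigl(w^{2\beta-1}(x)-w^{2\beta-1}(y)\bigr)+(u(x)-u(y))(\eta(x)-\eta(y)).
$$
Dividing by $|x-y|^{N+2s}$ and integrating over $Q$, the left-hand side equals $\tfrac{2}{a_{N,s}}\int_{\Omega}g\psi\,dx=\tfrac{2}{a_{N,s}}\int_{\Omega}g\,u\,w^{2\beta-1}\,dx$ by the weak formulation of \eqref{probV2}, and the last term on the right equals $\tfrac{2}{a_{N,s}}\int_{\Omega}f\eta\,dx=\tfrac{2}{a_{N,s}}\int_{\Omega}f\,u\,w^{2\beta}\,dx$ by that of \eqref{prob} (both test functions lie in $E_{\Sigma_{1}}^{s}$ after replacing $w$ by $w_{M}:=\min(w,M)$; monotone convergence removes the truncation at the end). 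Invoking the elementary inequality $(a-b)(a^{2\beta-1}-b^{2\beta-1})\ge \tfrac{4(2\beta-1)}{(2\beta)^{2}}(a^{\beta}-b^{\beta})^{2}$ for $a,b\ge 0$, $\beta>1/2$, and using $f,w\ge 0$, one obtains the Caccioppoli estimate
$$
\iint_{Q}u(x)u(y)\frac{\bigl(w^{\beta}(x)-w^{\beta}(y)\bigr)^{2}}{|x-y|^{N+2s}}\,dxdy\le \frac{C\beta^{2}}{2\beta-1}\int_{\Omega}|g|\,u\,w^{2\beta-1}\,dx.
$$

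With this inequality in hand, the iteration is standard. Applying Lemma~\ref{Sob} to $\varphi=w^{\beta}$ with $r=2$ (so $q=2+4s/N$) and combining with the Caccioppoli estimate produces
$$
\left(\int_{\Omega}u^{2}w^{q\beta}\,dx\right)^{\!2/q}\!\!\le C\left(\frac{\beta^{2}}{2\beta-1}\int_{\Omega}|g|\,u\,w^{2\beta-1}\,dx+\int_{\Omega}u^{2}w^{2\beta}\,dx\right).
$$
H\"older's inequality bounds the first term on the right by $\|g\|_{L^{p}(\Omega)}\|u w^{2\beta-1}\|_{L^{p'}(\Omega)}$, and the hypothesis $p>N/s$ guarantees $p'<N/(N-s)<1+2s/N=q/2$, so each iteration strictly increases the integrability exponent of $w$. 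A standard Moser bootstrap then converts this family of estimates into $\|w\|_{L^{\infty}(\Omega)}\le C\|g\|_{L^{p}(\Omega)}$, the dependence on $\|u\|_{\infty}$, $\|f\|_{\infty}$ and $\|f\delta^{s}\|_{L^{1}}^{-1}$ being tracked through the constant in Lemma~\ref{Sob} and the Hopf bound.

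The main obstacle is the non-local product-rule identity: unlike the local case, where a single integration by parts produces $u^{2}|\nabla w^{\beta}|^{2}$, the non-local expansion leaves the cross term $(u(x)-u(y))(\eta(x)-\eta(y))$. Crucially this cross term is \emph{exactly} the bilinear form of $u$ against $\eta$, which allows it to be eliminated via the equation for $u$ rather than absorbed by Cauchy--Schwarz. Once this algebraic identity is recognized, the remainder is a routine adaptation of the Moser scheme from \cite{D} to the fractional mixed setting.
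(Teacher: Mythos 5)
Your algebraic identity is correct --- it is the same bilinear expansion the paper uses in \eqref{eq6}, specialized to the test pair $u\varphi$, $v\varphi$ with $\varphi=w^{2\beta-1}$ --- and eliminating the cross term through the equation for $u$ rather than absorbing it by Cauchy--Schwarz is indeed the key structural point. The Caccioppoli estimate you obtain via the numerical inequality (which is \cite[Lemma 2.22]{AMPP}) is also right. The paper then runs a Stampacchia level-set truncation with $\varphi_\epsilon=(v/(u+\epsilon)-k)_+$ rather than your Moser power iteration; that difference by itself would be acceptable.

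The genuine gap is in the exponent bookkeeping of your iteration. After H\"older you are left with $\|u\,w^{2\beta-1}\|_{L^{p'}(\Omega)}=\bigl(\int_\Omega u^{p'}w^{(2\beta-1)p'}\,dx\bigr)^{1/p'}$, whereas the quantities your scheme propagates are $\Theta_\gamma:=\int_\Omega u^{2}w^{\gamma}\,dx$, because you took $r=2$ in Lemma~\ref{Sob}. Since $p'<2$ and $u$ only satisfies $u\geq c\,\delta^{s}$, converting the weight $u^{p'}$ into $u^{2}$ costs the unbounded factor $u^{p'-2}$; and interpolating $\int_\Omega u^{p'}w^{(2\beta-1)p'}\,dx$ between $\Theta_{2\beta}$ and $\Theta_{q\beta}$ forces the exponents $\theta+\sigma=p'/2$ and $\sigma=p'(\beta-1)/(\beta(q-2))$, so $\theta\geq 0$ would require $(\beta-1)/\beta\leq (q-2)/2=2s/N$, which fails for large $\beta$ whenever $N>2s$. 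Thus the family of estimates you display does not close: the condition $p'<q/2$ controls the growth of the $w$-exponent but says nothing about the mismatch in the $u$-weight. The repair is exactly the paper's (non-obvious) choice $r=p/(p-1)=p'$ in Lemma~\ref{Sob}: then the left-hand side carries the weight $u^{p'}$, the H\"older term carries the same weight, and the remaining term $\int_\Omega u^{2}\varphi^{2}\,dx$ is lowered to weight $u^{p'}$ at the harmless cost of $\|u\|_{L^\infty(\Omega)}^{2-p'}$. A second, smaller issue: replacing $w$ by $w_M=\min(w,M)$ destroys the exactness of your product-rule identity (it used $v=uw$ pointwise), so it must be re-proved as an inequality by checking the cases $w(x)>M\geq w(y)$ and so on --- this is precisely the case analysis the paper performs in \eqref{blabla} for its own regularization $v/(u+\epsilon)$. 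Neither point is fatal, but both are the actual content of the proof rather than routine steps.
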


\begin{proof}
First of all we point out here that, since $p>N/s>N/(2s)$, as we commented before, following the ideas developed in \cite{Leonori}, the function $v$ belongs to $L^{\infty}(\Omega)$. Let us now consider $\varphi\in {E_{\Sigma_1}^s}\cap L^\infty(\Omega)$. Thus, using $u\varphi$, $v\varphi$, that belong to $E_{\Sigma_1}^s$, as test functions in \eqref{probV2} and \eqref{prob} respectively, it follows that
\begin{equation}
\begin{split}
&\int_\Omega (gu-fv)\varphi\,dx\\
&=\frac{a_{N,s}}{2}\iint_Q\frac{(v(x)-v(y))(u(x)\varphi(x)-u(y)\varphi(y))}{|x-y|^{N+2s}}\,dxdy\\
&\;\;\;\;-\frac{a_{N,s}}{2}\iint_Q\frac{(u(x)-u(y))(v(x)\varphi(x)-v(y)\varphi(y))}{|x-y|^{N+2s}}\,dxdy\\
&=\frac{a_{N,s}}{2}\iint_Q\frac{u(x)v(y)\varphi(y)+u(y)v(x)\varphi(x)-v(x)u(y)\varphi(y)-v(y)u(x)\varphi(x)}{|x-y|^{N+2s}}\,dxdy\\
&= \frac{a_{N,s}}{2}\iint_Q\frac{v(x)u(y)-u(x)v(y)}{|x-y|^{N+2s}}(\varphi(x)-\varphi(y))\,dxdy \\
&=\frac{a_{N,s}}{2}\iint_Q\frac{u(y)(v(x)-v(y))-v(y)(u(x)-u(y))}{|x-y|^{N+2s}}(\varphi(x)-\varphi(y))\,dxdy.\label{eq6}
\end{split}\end{equation}
\noindent We take now $\epsilon>0$ and $k\geq 0$ and we set 
$$\displaystyle\varphi_\epsilon:=\left(\frac{v}{u+\epsilon}-k\right)_+\in {E_{\Sigma_1}^s}\cap L^\infty(\Omega).$$ 
We want to see that
\begin{equation}\begin{split}\label{blabla}
(u+\epsilon)(x)&(u+\epsilon)(y)(\varphi_\epsilon(x)-\varphi_\epsilon(y))^2\\
\leq &\left[u(y)(v(x)-v(y))-v(y)(u(x)-u(y))\right](\varphi_\epsilon(x)-\varphi_\epsilon(y))\\
&+\epsilon(v(x)-v(y))(\varphi_\epsilon(x)-\varphi_\epsilon(y)),
\end{split}\end{equation}
for $x$, $y$ in $\R^N$. If $x,y\in\{v\geq k(u+\epsilon)\}$ or $x,y\in \{v<k(u+\epsilon)\}$ the inequality easily follows (it is an identity indeed). Consider now the case $x\in\{v\geq k(u+\epsilon)\}$ and $y\in \{v<k(u+\epsilon)\}$. Thus, 
$$\varphi_\epsilon(x)=\frac{v(x)}{(u+\epsilon)(x)}-k,\qquad \varphi_\epsilon(y)=0,\mbox{ and }k\geq \frac{v(y)}{(u+\epsilon)(y)}.$$
Therefore,
\begin{equation*}\begin{split}
(u+\epsilon)(x)(&u+\epsilon)(y)(\varphi_\epsilon(x))^2=(u+\epsilon)(x)(u+\epsilon)(y)\left(\frac{v(x)}{(u+\epsilon)(x)}-k\right)\varphi_\epsilon(x)\\
=&\left\{(u+\epsilon)(y)v(x)-k(u+\epsilon)(x)(u+\epsilon)(y)\right\}\varphi_\epsilon(x)\\
\leq & \left\{(u+\epsilon)(y)v(x)-(u+\epsilon)(x)v(y)\right\}\varphi_\epsilon(x)\\
=&\left\{u(y)(v(x)-v(y))-v(y)(u(x)-u(y))+\epsilon(v(x)-v(y))\right\}\varphi_\epsilon(x),
\end{split}\end{equation*}
and \eqref{blabla} follows. Likewise, it holds whenever $y\in\{v\geq k(u+\epsilon)\}$ and $x\in \{v<k(u+\epsilon)\}$.

Hence, substituting in \eqref{eq6} with $\varphi=\varphi_\epsilon$, by \eqref{blabla} we obtain
\begin{equation*}\begin{split}\label{eq9}
\frac{a_{N,s}}{2}&\iint_Q (u+\epsilon)(x)(u+\epsilon)(y)\frac{(\varphi_\epsilon(x)-\varphi_\epsilon(y))^2}{|x-y|^{N+2s}}\,dxdy\\
&\,\leq\epsilon\frac{a_{N,s}}{2}\iint_Q \frac{(v(x)-v(y))(\varphi_\epsilon(x)-\varphi_\epsilon(y))}{|x-y|^{N+2s}}\,dxdy+\int_\Omega(gu-fv)\varphi_\epsilon\,dx,
\end{split}\end{equation*}
and using the positivity of $f$, $v$ and $\varphi_\epsilon$, and the fact that $v$ solves \eqref{probV2}, it yields
\begin{equation*}\label{eq11}
\frac{a_{N,s}}{2}\iint_Q (u+\epsilon)(x)(u+\epsilon)(y)\frac{(\varphi_\epsilon(x)-\varphi_\epsilon(y))^2}{|x-y|^{N+2s}}\,dxdy\leq \int_\Omega g(u+\epsilon)\varphi_\epsilon\,dx
\end{equation*}
Combining now this estimate with Lemma \ref{Sob} we get
\begin{equation}\label{star}
\left(\int_\Omega (u+\varepsilon)^r|\varphi_\epsilon|^q\,dx\right)^{2/q}\leq C\left(\int_\Omega g(u+\epsilon)\varphi_\epsilon\,dx+\int_\Omega (u+\varepsilon)^2\varphi_\epsilon^2\,dx\right),
\end{equation}
where $q=2+2rs/N$. Denoting $w:=v/u$, since
$$(u+\epsilon)\varphi_\epsilon=(v-k(u+\epsilon))_+\rightarrow (v-ku)_+=u(w-k)_+,$$
when $\epsilon\rightarrow 0$, from \eqref{star} we obtain, by monotone convergence, 
$$\left(\int_\Omega u^r(w-k)_+^q\,dx\right)^{2/q}\leq C\left(\int_\Omega gu(w-k)_+\,dx+\int_\Omega u^2(w-k)_+^2\,dx\right).$$
We choose now $r=\frac{p}{p-1}\in (1,2^*_s)$. Notice that in this case $q>2$ and $2\frac{q-r}{q-2}>0$. Thanks to this, the fact that the previous integral inequality is purely local allows us to conclude the proof exactly as in \cite[Lemma 2]{D} using an iterative Stampacchia method. We mention here that the necessity of requiring $g\in L^{p}(\Omega)$ with $p>N/s$ comes from this iterative method. In fact, to obtain the conclusion of the theorem is important to be able to affirm that the solution of some Bernoulli type differential inequality $A(k)\leq C\|g\|_{L^{p}(\Omega)}(-A'(k))^\gamma$, $\gamma=2-2/q-1/p$, is equal to zero for some $k(\|g\|^{1/\gamma}_{L^{p}(\Omega)})$, bigger than a fixed quantity that depends on $\|v\|_{L^{\infty}(\Omega)}$. For that $\gamma>1$ is needed, so the condition over $p$ comes out.
\end{proof}

\noindent Using the previous result and  following some ideas developed in \cite[Lemma 3.2]{brezcabre}, we are now able to give the
\begin{demoelliptic}
Let $K\subset\Omega$ be a fixed but arbitrary compact set strictly contained in $\Omega$. Then, there exists $r>0$ such that $r\leq \mbox{dist}(x_0,\partial\Omega)$ for every $x_0\in K$  so, by \cite[Proposition 2.2.6 and Proposition 2.2.2]{PhS}, it follows that
$$
u(x_0)\geq \int_{\R^N}{u(z)\gamma_{r}(z-x_0)\,dz}{\geq}\int_{\Omega}{u(z)\gamma_{r}(z-x_0)\,dz}>0,\, x_0\in K.
$$
Here $\gamma_{r}:=(-\Delta)^s\Gamma_{r}$ where $\Gamma_{r}$ is a $\mathcal{C}^{1,1}$ function that matches outside the ball $B(0,r)$ with the fundamental solution $\Phi:=C|x|^{2s-N}$ and that is a paraboloid inside this ball. Then there exists a positive constant $c>0$ such that $u(x_0)> c$ for every $x_0\in K$. That is
\begin{equation}\label{nyy}
u(x_0)>M\int_{\Omega}{u(z)\, dz},\quad x_0\in K,
\end{equation}
where
$$M=c\left(\int_{\Omega}{u(z)\, dz}\right)^{-1}>0.$$
Consider now the solution $w$ of 
$$
\left\{
\begin{aligned}
(-\Delta)^s w&=f _0&&  \mbox{in } \Omega,\\
w&=0 && \mbox{in } \Sigma_1,\\
\mathcal{N}_s w&=0 &&\mbox{in }\Sigma_2,
\end{aligned}\right.
$$
where $0\lneqq f_0\leq 1$, $f_0\in\mathcal{C}^{\infty}_{0}(K)$. Therefore, by \eqref{nyy} and Lemma \ref{aux_elliptic}, for every $x\in K$ we get that
\begin{equation}\begin{split}
u(x)&\geq M\int_{\Omega}{u(z) f_0(z)\, dz}=\frac{a_{N,s}}{2}\iint_Q\frac{(u(x)-u(y))(w(x)-w(y))}{|x-y|^{N+2s}}\,dxdy\\
&= M\int_{\Omega}{w(z) f(z)\, dz}\geq C_0\int_{\Omega}{f(z)\overline{\xi_0}(z) \, dz}\geq \lambda w(x),\label{inK}
\end{split}
\end{equation}
where
$$\lambda:=\frac{C_0}{\|w\|_{L^{\infty}({\Omega})}}\int_{\Omega}{f(z) \overline{\xi_0}(z)\, dz}.$$ 
Then it is clear that
$$
\left\{
\begin{aligned}
(-\Delta)^s (u-\lambda w)&=f \geq 0&&  \mbox{in } \Omega\setminus K,\\
u-\lambda w&\geq 0 && \mbox{in } \Sigma_1\cup K,\\
\mathcal{N}_s {(u-\lambda w)}&=0 &&\mbox{in }\Sigma_2,
\end{aligned}\right.
$$
and therefore, by comparison (see Remark \ref{comparison}), it follows that 
\begin{equation}\label{outK}
\mbox{$u-\lambda w\geq 0$ in $\Omega\setminus K$.}
\end{equation}
Thus, by Lemma \ref{aux_elliptic}, \eqref{inK} and \eqref{outK} we conclude that
$$u(x)\geq C\left(\int_{\Omega}{f(z) \overline{\xi_0}(z)\, dz}\right) \overline{\xi_0}(x),\quad x\in \Omega,$$
as desired.
\end{demoelliptic}

\section{Parabolic Maximum Principle}

As happened in the elliptic case, before proving Theorem \ref{main_parabolic} we need to establish some comparison results. The first one will provide us a pointwise comparison between the  first eigenfunction of the fractional Laplacian  and the solution of the elliptic mixed problem with right hand side equal to one:

\begin{prop}\label{bounds}
Let $\overline{\chi}_1$ be the first eigenfunction of $(-\Delta)^s$ with {mixed boundary conditions in $\Omega$, i.e., the solution to \eqref{probEVm}} with $L^2(\Omega)$-norm equal to one. Then, there exists a positive constant $C=C(\Omega,N,s, \Sigma_1, \Sigma_2)$ such that
\begin{equation}\label{order}
{C^{-1}\overline{\xi_0}\leq \overline{\chi}_1\leq C\overline{\xi_0}\mbox{ in }\Omega},
\end{equation}
where $\overline{\xi_0}$ is the solution to \eqref{probV}.
\end{prop}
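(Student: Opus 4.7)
The plan is to establish the two inequalities in \eqref{order} separately, combining a direct comparison argument for the upper bound with a direct application of Theorem \ref{main_elliptic} for the lower bound.

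For the upper bound $\overline{\chi}_1 \le C\,\overline{\xi_0}$, I would first observe that $\overline{\chi}_1\in L^\infty(\Omega)$; this follows from the very Moser-type iteration for linear fractional equations invoked in the proof of Lemma \ref{Sob} (applied to the equation $(-\Delta)^s\overline{\chi}_1=\lambda_1\overline{\chi}_1$ with right-hand side in $L^2$). Setting $M:=\|\overline{\chi}_1\|_{L^\infty(\Omega)}$, the function $\overline{\chi}_1$ satisfies $(-\Delta)^s\overline{\chi}_1=\lambda_1\overline{\chi}_1\le \lambda_1 M$ in $\Omega$ together with the same zero mixed boundary conditions as $\overline{\xi_0}$. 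Since $\lambda_1 M\,\overline{\xi_0}$ solves \eqref{probV} with right-hand side $\lambda_1 M$, the comparison principle for the mixed problem (Remark \ref{comparison}) applied to $\lambda_1 M\,\overline{\xi_0}-\overline{\chi}_1$ yields $\overline{\chi}_1\le \lambda_1 M\,\overline{\xi_0}$ in $\Omega$.

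For the lower bound $\overline{\xi_0}\le C\,\overline{\chi}_1$, I would apply Theorem \ref{main_elliptic} to $\overline{\chi}_1$ itself, viewed as the solution of \eqref{prob} with right-hand side $f=\lambda_1\overline{\chi}_1$. One should note that although Theorem \ref{main_elliptic} is stated for $f\in\mathcal{C}_0^\infty(\Omega)$, the proof in Section 3 and in particular Lemmas \ref{Sob} and \ref{aux_elliptic} only require $f\in L^\infty(\Omega)$ with $f\gneqq 0$; both conditions are met by $\lambda_1\overline{\chi}_1$ (boundedness by the previous step, and $\gneqq 0$ by the positivity of the first eigenfunction, which can also be read off from the integral representation of $(-\Delta)^s$ exactly as in \eqref{nyy}). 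This yields
\[
\overline{\chi}_1(x)\ge c\,\lambda_1\left(\int_\Omega\overline{\chi}_1(y)\,\overline{\xi_0}(y)\,dy\right)\overline{\xi_0}(x), \qquad x\in\Omega,
\]
and the integral is a strictly positive number depending only on $\Omega$, $\Sigma_1$, $\Sigma_2$, $N$ and $s$. Combining the two bounds delivers \eqref{order} with a constant of the stated dependence.

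The main obstacle I anticipate is the verification that Theorem \ref{main_elliptic} truly applies with the relaxed hypothesis $f\in L^\infty(\Omega)$, $f\gneqq 0$ rather than $f\in\mathcal{C}_0^\infty(\Omega)$. The only place where smoothness could matter is the construction of the auxiliary $w$ in the proof of Theorem \ref{main_elliptic}, where the test $f_0\in\mathcal{C}_0^\infty(K)$ is chosen independently of $f$; the remaining chain of estimates only uses boundedness of the ambient solution and positivity of the weight $\int_\Omega f\,\overline{\xi_0}$. Once this is observed, both inequalities fall out immediately, and there is no genuine difficulty beyond the eigenfunction's $L^\infty$ bound and positivity.
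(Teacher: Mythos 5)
Your argument is correct, but it takes a genuinely different route from the paper's, which proves both inequalities by a Moser-type iteration on the quotients $w=\overline{\chi}_1/\overline{\xi_0}$ and $w=\overline{\xi_0}/\overline{\chi}_1$, using Lemma \ref{Sob} together with the numerical inequality of \cite{AMPP} and a regularization $w_\epsilon$ to justify the computations. For the upper bound you use precisely the comparison argument that the authors themselves record in the remark following the proposition ($\overline{\chi}_1\in L^\infty(\Omega)$, then compare with $\lambda_1\|\overline{\chi}_1\|_{L^\infty(\Omega)}\overline{\xi_0}$); the paper keeps the iteration because it survives in settings (eigenvalue problems with singular potentials, as in \cite{DD}) where no $L^\infty$ bound on the eigenfunction is available. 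For the lower bound your shortcut via Theorem \ref{main_elliptic} with $f=\lambda_1\overline{\chi}_1$ is more of a departure, and it is legitimate: there is no circularity, since Theorem \ref{main_elliptic} is established in Section 3 independently of this proposition (the paper itself already invokes it at the end of its own proof to justify finiteness of certain integrals), and you correctly observe that its proof uses $f$ only through $f\in L^\infty(\Omega)$, $f\geq 0$ and $\int_\Omega f\overline{\xi_0}\,dx>0$, the bump $f_0$ to which Lemma \ref{aux_elliptic} is actually applied being fixed and independent of $f$. One point that deserves an explicit sentence: the strict positivity of $\overline{\chi}_1$ in $\Omega$ is not quite ``read off'' from \eqref{nyy}, because that step presupposes $u\geq 0$ in $\R^N$; one must first note that $|\overline{\chi}_1|$ does not increase the Rayleigh quotient, so the first eigenfunction may be taken nonnegative, and only then apply the mean-value argument to get $\overline{\chi}_1>0$ in $\Omega$ (and hence $\int_\Omega\overline{\chi}_1\overline{\xi_0}\,dx>0$). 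In summary, the paper's longer proof buys robustness and self-containedness at the level of Lemma \ref{Sob}; yours buys brevity and a clean logical reduction to the elliptic maximum principle already proved.
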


\begin{proof}
To prove that there exists $C>0$ such that $\overline{\chi}_1\leq C\overline{\xi_0}$, we consider the function 
\begin{equation}\label{division1}
w:=\frac{\overline{\chi}_1}{\overline{\xi_0}}.
\end{equation} Thus, taking, for $j\geq 1$, $\overline{\xi_0}w^{2j-1}$ and $\overline{\chi}_1 w^{2j-1}$ as test functions in \eqref{probEVm} and \eqref{probV} respectively, and proceeding as in \eqref{eq6} we obtain
\begin{equation*}\begin{split}
\lambda_1\int_\Omega &\overline{\chi}_1 \overline{\xi_0} w^{2j-1}\,dx \,\geq \int_\Omega (\lambda_1\overline{\chi}_1 \overline{\xi_0}-\overline{\chi}_1) w^{2j-1}\,dx\\
&\, =\frac{a_{N,s}}{2}\iint_Q\frac{\overline{\xi_0}(y)(\overline{\chi}_1(x)-\overline{\chi}_1(y))}{|x-y|^{N+2s}}(w^{2j-1}(x)-w^{2j-1}(y))\,dxdy\\
&\,- \frac{a_{N,s}}{2}\iint_Q\frac{\overline{\chi}_1(y)(\overline{\xi_0}(x)-\overline{\xi_0}(y))}{|x-y|^{N+2s}}(w^{2j-1}(x)-w^{2j-1}(y))\,dxdy\\
&\,=\frac{a_{N,s}}{2}\iint_Q\overline{\xi_0}(x)\overline{\xi_0}(y)\frac{(w(x)-w(y))(w^{2j-1}(x)-w^{2j-1}(y))}{|x-y|^{N+2s}}\,dxdy.
\end{split}\end{equation*}
Applying now the numerical lemma \cite[Lemma 2.22]{AMPP} with $s_1:=w(x)$, $s_2:=w(y)$ and $a:=2j-1$ it yields
$$
\lambda_1\int_\Omega \overline{\chi}_1 \overline{\xi_0} w^{2j-1}\,dx \geq \left(\frac{2j-1}{{j^2}}\right)\frac{a_{N,s}}{2}\iint_Q\overline{\xi_0}(x)\overline{\xi_0}(y)\frac{(w^j(x)-w^j(y))^2}{|x-y|^{N+2s}}\,dxdy.
$$
Then, choosing now $u:=\overline{\xi_0}$ and $r:=2$ in Lemma \ref{Sob}, we conclude the existence of $q:=2\left(1+\frac{2s}{N}\right)$ and $C>0$ such that
\begin{equation}\begin{split}\label{iter}
C\left(\int_\Omega \overline{\xi_0}^2 w^{qj}\,dx\right)^{2/q}&\,\leq \lambda_1\int_\Omega \overline{\chi}_1 \overline{\xi_0} w^{2j-1}\,dx+ \widetilde{C}j\int_\Omega \overline{\xi_0}^2 w^{2j}\,dx\\
&\, \leq Cj\int_\Omega \overline{\xi_0}^2 w^{2j}\,dx.
\end{split}\end{equation}
If we define
$$\mu=\frac{q}{2},\qquad j_k:=2\mu^k,\qquad \theta_k:=\left(\int_\Omega\overline{\xi_0}^2w^{j_k}\right)^{1/j_k},\qquad k=0,1,\ldots$$
thus, \eqref{iter} can be rewritten as 
$$\theta_{k+1}\leq (C\mu^k)^{1/\mu^k}\theta_k,$$
and iterating we obtain that
$$\sup_\Omega w =\lim_{k\rightarrow\infty}\theta_k\leq C\theta_0=C\left(\int_\Omega\overline{\chi}_1^2\,dx\right)^{1/2}=C<+\infty.$$
Therefore, 
\begin{equation}\label{primera_ineq}
\overline{\chi}_1\leq C\overline{\xi_0}.
\end{equation} 
We notice here that to justify the computations above we can consider
$$w_\epsilon:=\frac{\overline{\chi}_1}{\overline{\xi_0}+\epsilon},$$
that is well defined in $\Omega$. Thus we can repeat the previous proof for the functions $w_{\epsilon}$ obtaining that $\sup_\Omega w_{\epsilon}\leq C$ and passing to the limit when $\epsilon\to 0$ to conclude. 
\medskip

To prove that $\overline{\xi_0}\leq C \overline{\chi}_1$ we consider 
\begin{equation}\label{division2}
w:=\frac{\overline{\xi_0}}{\overline{\chi_1}}. 
\end{equation}
Proceeding as before, and applying again \cite[Lemma 2.22]{AMPP} we obtain
$$\left(\frac{2j-1}{{j^2}}\right)\frac{a_{N,s}}{2}\iint_Q\overline{\chi_1}(x)\overline{\chi_1}(y)\frac{(w^j(x)-w^j(y))^2}{|x-y|^{N+2s}}\,dxdy\leq\int_\Omega\overline{\chi_1}w^{2j-1}\,dx.$$
Thus, \eqref{primera_ineq} implies
\begin{equation*}
\frac{a_{N,s}}{2}\iint_Q\overline{\chi_1}(x)\overline{\chi_1}(y)\frac{(w^j(x)-w^j(y))^2}{|x-y|^{N+2s}}\,dxdy\leq Cj\int_\Omega\overline{\chi_1}w^{2j}\,dx.
\end{equation*}
Applying H\"older's inequality, Lemma \ref{Sob} and Young's inequality on the right hand side of the previous inequality, we get that
\begin{equation*}\begin{split}
&\frac{a_{N,s}}{2}\iint_Q\overline{\chi_1}(x)\overline{\chi_1}(y)\frac{(w^j(x)-w^j(y))^2}{|x-y|^{N+2s}}\,dxdy\\
&\leq Cj\left(\int_\Omega\overline{\chi_1}^2w^{2j}\,dx\right)^{1/2}\left(\int_\Omega w^{2j}\,dx\right)^{1/2}\\
&\leq \frac{Cj^2}{2}\left(\int_\Omega\overline{\chi_1}^2w^{2j}\,dx\right)^{1/2}+\frac{a_{N,s}}{4}\iint_Q\overline{\chi_1}(x)\overline{\chi_1}(y)\frac{(w^j(x)-w^j(y))^2}{|x-y|^{N+2s}}\,dxdy\\
&\, \;\;\;\;+\frac{1}{2}\int_\Omega \overline{\chi_1}^2 w^{2j}\,dx.
\end{split}\end{equation*}
Therefore
\begin{equation*}
\frac{a_{N,s}}{2}\iint_Q\overline{\chi_1}(x)\overline{\chi_1}(y)\frac{(w^j(x)-w^j(y))^2}{|x-y|^{N+2s}}\,dxdy\leq Cj^2\int_\Omega\overline{\chi_1}^2w^{2j}\,dx.
\end{equation*}
Using this estimate together with Lemma \ref{Sob} we obtain
\begin{equation}\label{extra}
\left(\int_\Omega \overline{\chi_1}^2w^{qj}\,dx\right)^{2/q}\leq Cj^2\int_\Omega \overline{\chi_1}^2w^{2j}\,dx,
\end{equation}
with $q:=2\left(1+\frac{2s}{N}\right)$. Iterating as it was done in the proof of \eqref{primera_ineq}, we can conclude that $\sup_\Omega w\leq C$ and therefore 
\begin{equation}\label{segunda_ineq}
\overline{\xi_0}\leq C \overline{\chi_1}.
\end{equation}
We conclude noticing that, as in the proof of \eqref{primera_ineq}, the computations done to prove \eqref{segunda_ineq} can be justified considering $$w_\epsilon:=\frac{\overline{\xi_0}}{\overline{\chi_{1,\epsilon}}},$$
that is well defined in $\Omega$, whith $\overline{\chi_{1,\epsilon}}:=\overline{\chi_1}+\epsilon$. Repeating the previous estimates for the function $w_{\epsilon}$ we will get that 
$$\left(\int_\Omega \overline{\chi_{1,\epsilon}}^2w_{\epsilon}^{qj}\,dx\right)^{2/q}\leq Cj^2\int_\Omega \overline{\chi_{1,\epsilon}}^2w_{\epsilon}^{2j}\,dx + Cj\epsilon\int_\Omega \frac{\overline{\chi_{1,\epsilon}}}{\overline{\xi_0}}\, dx.$$ 
Thus, by the monotone convergence theorem, we can pass to the limit when $\epsilon\to 0$ achieving \eqref{extra}. The integrals that appear here are well defined due to Theorem \ref{main_elliptic}.
\end{proof}

\begin{remark}
The inequality \eqref{primera_ineq} can be proved by a simple comparison argument just by noticing that $\overline{\chi}_1\in L^\infty(\Omega)$ (that follows exactly as in \cite[Propostition 2.2]{BCSS}). We notice that in this case the inequality will be obtained with a constant depending on $\|\overline{\chi}_1\|_{L^{\infty}(\Omega)}$ . However, we keep the iterative proof since it can be applied to more general eigenvalue problems (for instance with unbounded potentials like in \cite{DD}).
\end{remark}

\noindent Now we are able to prove the next
\begin{theorem}\label{key_parabolico}
 Let $u$ and $\overline{\xi_0}$ be the solutions to \eqref{probP} and \eqref{probV} respectively. Then,
$$u(t)\geq c(t)\overline{\xi_0},$$
for some $c(t)>0$ depending also on $N$, $s$ and $\Omega$.
\end{theorem}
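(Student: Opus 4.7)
By Proposition~\ref{bounds}, $\overline{\chi}_1$ and $\overline{\xi_0}$ are comparable in $\Omega$, so it suffices to establish
\begin{equation*}
u(\cdot,t) \geq \tilde c(t)\,\overline{\chi}_1 \quad\text{in } \Omega
\end{equation*}
for some $\tilde c(t) > 0$. The central observation is that $\varphi(x,t):=e^{-\lambda_1 t}\overline{\chi}_1(x)$ is an \emph{exact} solution of \eqref{probP} (with initial datum $\overline{\chi}_1$), since $\overline{\chi}_1$ satisfies the eigenvalue identity $(-\Delta)^s\overline{\chi}_1=\lambda_1\overline{\chi}_1$ together with the mixed boundary conditions of \eqref{probEVm}. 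Hence, once we exhibit a single time $t_0>0$ and a constant $c_0>0$ with
\begin{equation*}
u(\cdot,t_0) \geq c_0\,\overline{\chi}_1 \quad\text{in } \Omega,
\end{equation*}
the parabolic comparison principle (Remark~\ref{comparison}) applied to $u(x,t)-c_0 e^{-\lambda_1(t-t_0)}\overline{\chi}_1(x)$ on $[t_0,\infty)$ (this difference solves \eqref{probP} with vanishing mixed boundary data and non-negative ``initial'' datum at $t=t_0$) yields $u(\cdot,t)\geq c_0 e^{-\lambda_1(t-t_0)}\overline{\chi}_1$ for every $t\geq t_0$. Picking $t_0=t/2$ inside the argument covers every $t>0$.

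The crux is therefore to prove, \emph{for every $t_0>0$}, the pointwise lower bound $u(\cdot,t_0)\geq c_0(t_0,u_0)\,\overline{\chi}_1$. I would split this into two pieces. \emph{Interior:} the parabolic strong maximum principle (a consequence of the positivity of the heat-semigroup kernel associated with $(-\Delta)^s$ under mixed boundary conditions, combined with $u_0\geq 0$, $u_0\not\equiv 0$) gives $u(\cdot,t_0)>0$ throughout $\Omega$, hence by continuity $u(\cdot,t_0)\geq\mu_K>0$ on every compact $K\subset\subset\Omega$. \emph{Boundary:} for $t_0>0$, the analytic semigroup $\{S(t)\}$ sends $L^2(\Omega)$ into the domain of $(-\Delta)^s$ with mixed boundary conditions, so $u(\cdot,t_0)$ inherits the same boundary regularity as elliptic solutions and a parabolic Hopf-type lemma provides $u(\cdot,t_0)\geq c'\delta^s$ near $\partial\Omega\cap\Sigma_1$. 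Using $\overline{\chi}_1\leq C\delta^s$ (from the $C^s$ boundary regularity of $\overline{\chi}_1$ on the Dirichlet portion, cf.~\cite{quim}), these two bounds combine into $u(\cdot,t_0)\geq c_0\overline{\chi}_1$ in all of $\Omega$.

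\textbf{Main obstacle.} The decisive difficulty is the Hopf-type boundary estimate near $\partial\Omega\cap\Sigma_1$: while it is classical for the pure Dirichlet fractional problem, here one must ensure that even for initial data $u_0$ vanishing near the Dirichlet part of the boundary (such as $u_0\in C_0^\infty(\Omega)$) the semigroup $S(t_0)$ instantaneously produces the correct $\delta^s$ decay at $\partial\Omega\cap\Sigma_1$ for arbitrarily small $t_0>0$, together with the correct behaviour on $\Sigma_2$. If a direct Hopf argument proves too fragile, an alternative would be to apply Theorem~\ref{main_elliptic} (or its straightforward extension to the shifted operator $(-\Delta)^s+\mu$, $\mu>0$) to the resolvent $R_\mu u_0(x):=\int_0^\infty e^{-\mu t}u(x,t)\,dt$, which solves $((-\Delta)^s+\mu)R_\mu u_0 = u_0$ under the same mixed conditions; the resulting elliptic lower bound $R_\mu u_0\gtrsim \overline{\xi_0}$ can then be pulled back to a pointwise bound on $u(\cdot,t)$ via the analyticity in $t$ of the semigroup.
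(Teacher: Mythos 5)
Your reduction to $\overline{\chi}_1$ via Proposition~\ref{bounds}, and your forward-propagation step (comparison of $u$ with $c_0e^{-\lambda_1(t-t_0)}\overline{\chi}_1$ from time $t_0$ on), are both sound and consistent with the paper. The gap is in the step you correctly identify as the crux, namely the bound $u(\cdot,t_0)\geq c_0\overline{\chi}_1$ at a fixed positive time: your argument only covers compact subsets of $\Omega$ and a neighbourhood of the Dirichlet portion of the boundary, and it breaks down near $\partial\Omega\cap\overline{\Sigma_2}$. The inequality $\overline{\chi}_1\leq C\delta^s$ that you invoke is false there: $\overline{\chi}_1$ satisfies a Neumann-type condition on $\Sigma_2$ and does not vanish on that part of $\partial\Omega$ (it is bounded below by a positive constant near $\partial\Omega\cap\overline{\Sigma_2}$, as is $\overline{\xi_0}$), whereas $\delta^s\to 0$. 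Consequently a Hopf-type bound $u(\cdot,t_0)\geq c'\delta^s$ — which is all that comparison with the Dirichlet problem gives — does not imply $u(\cdot,t_0)\geq c_0\overline{\chi}_1$ near the Neumann boundary. This is exactly the obstruction the paper flags in Remark~\ref{comparison} ("the previous inequality does not directly imply the statement of Theorem~\ref{main_elliptic}"), and it is the whole reason these mixed-boundary maximum principles require work beyond the Dirichlet case.

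The paper's proof closes precisely this hole by a Moser-type iteration on the quotient $w=e^{-\lambda_1 t}\overline{\chi}_1/u$: testing the equations for $u$ and $v=e^{-\lambda_1 t}\overline{\chi}_1$ against powers $w^{2j-1}$, one shows that $\theta_{2j}(t)=\int_\Omega u^2w^{2j}\,dx$ is nonincreasing and satisfies a differential inequality (via the weighted Sobolev inequality of Lemma~\ref{Sob}, whose weight $u(x,t)u(y,t)$ is made uniform in $t$ using the crude bound $u(t)\geq c\delta^s$ only as an \emph{input}, not as the final estimate); iterating in $j$ yields $\sup_\Omega w(\cdot,t)\leq C t^{-\beta}$, which is the quantitative form of $u(\cdot,t)\geq c(t)\overline{\chi}_1$ valid up to the Neumann boundary. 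Your alternative via the resolvent $R_\mu u_0$ does not rescue the argument as stated either: a lower bound on $\int_0^\infty e^{-\mu t}u(\cdot,t)\,dt$ does not transfer to a pointwise-in-time lower bound on $u(\cdot,t)$ without an additional mechanism (the paper instead obtains the time-dependent input through the semigroup identity $u(\cdot,t)=S(t/2)u(\cdot,t/2)$ in Proposition~\ref{Martel}). To repair your proof you would need to establish the quotient bound $\sup_\Omega \overline{\chi}_1/u(\cdot,t_0)<\infty$ directly, which is essentially the iteration the paper performs.
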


\begin{proof}
First of all we notice that, since by Proposition \ref{bounds} we know $\overline{\xi_0}\leq C{\overline{\chi}_1}$ (where ${\overline{\chi}_1}$ is the normalized solution of {\eqref{probEVm}} and $C=C(\Omega,N,s, \Sigma_1, \Sigma_2$)), the result holds if we prove 
\begin{equation}\label{objetivo}
u(t)\geq c(t){\overline{\chi}_1}.
\end{equation}
Let {$T>0$} and consider 
$$v(x,t):=e^{-\lambda_1 t}{\overline{\chi}_1}(x),\quad x\in\Omega,\quad 0<t<T,$$
that clearly satisfies
\begin{equation}\label{probVV}
\left\{
\begin{aligned}
v_t+(-\Delta)^s v&=0 &&  \mbox{in } \Omega\times (0,T),\\
v&=0 && \mbox{in } \Sigma_1\times (0,T),\\
\mathcal{N}_s v&=0 && \mbox{in } \Sigma_2\times (0,T),
\end{aligned}\right.
\end{equation}
We define now 
$$w(x,t):=\frac{v}{u}\quad\mbox{ and }\quad\theta_j(x,t):=\int_\Omega u^2(x,t)w^j(x,t)\,dx,\;\; j\geq 1,\;\; t\in (0,T).$$
In order to get \eqref{objetivo} our next goal is, using an iterative argument that involve the functions $\theta_j(x,t)$, to prove that
\begin{equation}\label{objetivo2}
w(x,t)\leq C_0 t^{-\beta},\qquad 0\leq t\leq T,
\end{equation}
for some $C_0>0$, $\beta>0$ {independent of $t$}.
From now on, when there is no possible confusion, we will omit the dependence of every function on the variable $t$ to simplify the notation.
To obtain \eqref{objetivo2} we notice that, by definition,
$$w_t=\frac{v_tu-vu_t}{u^2},$$
and from here, since $u$ and $v$ solve \eqref{probP} and \eqref{probVV} respectively, it can be seen that
$$\int_\Omega u^2\varphi w_t\,dx+\int_{\mathbb{R}^N}u\varphi(-\Delta)^sv\,dx-\int_{\mathbb{R}^N}v\varphi (-\Delta)^su\,dx=0.$$
Choosing $\varphi=w^{2j-1}$ and writing the weak formulation, this implies
\begin{equation*}\begin{split}
0=& \,\frac{1}{2j}\int_\Omega u^2(w^{2j})_t\,dx\\
&+\frac{a_{N,s}}{2}\iint_Q\frac{u(x)w^{2j-1}(x)-u(y)w^{2j-1}(y))(v(x)-v(y))}{|x-y|^{N+2s}}\,dxdy\\
&-\iint_Q\frac{(v(x)w^{2j-1}(x)-v(y)w^{2j-1}(y))(u(x)-u(y))}{|x-y|^{N+2s}}\,dxdy\\
=& \,\frac{1}{2j}\int_\Omega u^2(w^{2j})_t\,dx +\frac{a_{N,s}}{2}\iint_Q\frac{u(x)v(y)(w^{2j-1}(y)-w^{2j-1}(x))}{|x-y|^{N+2s}}\,dxdy\\
&\,+\frac{a_{N,s}}{2}\iint_Q \frac{v(x)u(y)(w^{2j-1}(x)-w^{2j-1}(y))}{|x-y|^{N+2s}}\,dxdy\\
=& \,\frac{1}{2j}\int_\Omega u^2(w^{2j})_t\,dx\\
&+\frac{a_{N,s}}{2}\iint_Qu(x)u(y)\frac{(w^{2j-1}(x)-w^{2j-1}(y))(w(x)-w(y))}{|x-y|^{N+2s}}\,dxdy.
\end{split}\end{equation*}
Applying once again \cite[Lemma 2.22]{AMPP}, it follows that
\begin{equation}\label{eq54}
\!\!\int_\Omega \!u^2(w^{2j})_t\,dx+\frac{a_{N,s}(2j-1)}{j}\!\!\iint_Q u(x)u(y)\frac{(w^j(x)-w^j(y))^2}{|x-y|^{N+2s}}\,dxdy\leq 0.
\end{equation} 
Moreover, since
$$\theta'_{2j}(t)=2\int_\Omega uu_tw^{2j}\,dx+\int_\Omega u^2(w^{2j})_t\,dx,$$
plugging this equality into \eqref{eq54}, we get that
\begin{equation}\label{eq56}
\theta'_{2j}-2\int_\Omega uu_tw^{2j}\,dx+\frac{a_{N,s}(2j-1)}{j}\!\iint_Qu(x)u(y)\frac{(w^j(x)-w^j(y))^2}{|x-y|^{N+2s}}\,dxdy \leq 0.
\end{equation}
Furthermore, since testing in \eqref{probP} with $uw^{2j}$ one gets
\begin{equation*}\begin{split}
\int_\Omega uu_tw^{2j}\,dx &=-\frac{a_{N,s}}{2}\iint_Q\frac{(u(x)-u(y))(u(x)w^{2j}(x)-u(y)w^{2j}(y))}{|x-y|^{N+2s}}\,dxdy\\
&=-\frac{a_{N,s}}{2}\iint_Q\frac{(u(x)w^{{j}}(x)-u(y)w^{{j}}(y))^2}{|x-y|^{N+2s}}\,dxdy\\
&\;\;\;\;+\frac{a_{N,s}}{2}\iint_Q u(x)u(y)\frac{(w^j(x)-w^j(y))^2}{|x-y|^{N+2s}}\,dxdy,
\end{split}\end{equation*}
by \eqref{eq56} we conclude
\begin{equation*}\begin{split}
\theta'_{2j}&+a_{N,s}\iint_Q\frac{(u(x)w^j(x)-u(y)w^j(y))^2}{|x-y|^{N+2s}}\,dxdy\\
&+\frac{a_{N,s}(2j-1)}{j}\iint_Qu(x)u(y)\frac{(w^j(x)-w^j(y))^2}{|x-y|^{N+2s}}\,dxdy\leq 0.
\end{split}\end{equation*}
Therefore we have obtained that $\theta'_{2j}(t)\leq 0$, $j\geq 1$, $0<t<T$, and this in particular implies 
\begin{equation}\label{imp1}
\theta_j(t)\leq \theta_j(0)\mbox{ for all }t\in [0,T] \mbox{ and } j\geq 2.
\end{equation}

On the other hand, by comparison with the solution of the fractional heat equation with zero Dirichlet condition and the Hopf's Lemma (see {\cite{bonforte, CKS}}) we have that
$$u(t)\geq c(t) \delta^s,$$
for some positive function $c(t)$. Thus, we can assume 
$$u(t)\geq c\delta^s \mbox{ for }t\in [0,T],$$
with $c>0$ independent of $t$ in this range, and we can proceed as in the proof of Lemma \ref{Sob} (see \eqref{r0}) to get
\begin{equation}\label{eq51}
\int_\Omega \varphi^2\,dx\leq C\left(\iint_Qu(x,t)u(y,t)\frac{(\varphi(x)-\varphi(y))^2}{|x-y|^{N+2s}}\,dxdy\!+\!\int_\Omega u^2\varphi^2\,dx\right)
\end{equation}
for every $\varphi\in E_{\Sigma_1}^s$, $t\in [0,T]$ and $C>0$ independent of $t$.

Therefore using \eqref{imp1} and \eqref{eq51} we can follow analogously to the proof of \cite[Claim 5.3]{DD} to get
$$
\frac{1}{C}\frac{\theta_{2j}^{1+\gamma}(t)}{\theta_j^{2\gamma}(0)}+\theta'_{2j}(t)\leq \theta_{2j}(t)
$$
with $\gamma:= \frac{2s}{N+2s}$. 
And from here
\begin{equation}\label{eq511}
\theta_{2j}(t)\leq t^{-1/\gamma}\theta_j^2(0), \;\;\;t\in [0,T].
\end{equation}
Iterating \eqref{eq511}, as in  \cite[Claim 5.5]{DD}, we conclude
$$\sup_\Omega w(x,t)\leq C t^{-1/2\gamma}\|{\overline{\chi}_1}\|_{L^2(\Omega)},$$
that is, we have obtained \eqref{objetivo2} with $\beta=1/2\gamma$ and $C_0=C_0(\|{\overline{\chi}_1}\|_{L^2(\Omega)})$.
Therefore \eqref{objetivo} holds with $c(t)=C_0 e^{\lambda_1 t}t^{-1/2\gamma}$.
\end{proof}

\noindent Following the ideas developed in \cite[Lemma 2]{martel} we present now the last result needed to prove Theorem \ref{main_parabolic}. 
\begin{prop} \label{Martel}
Let $u$ be the solution to \eqref{probP}. Then,
$$u(x,t)\geq  c(t)\left(\int_{\Omega}{u_0(x)\delta^s(x)}\right)\delta^s(x),\quad x\in\Omega,\quad t\in[0,T],$$
where $\delta(x):=\textrm{dist}(x,\partial\Omega)$ and  $c(t)> 0$.
\end{prop}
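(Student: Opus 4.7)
The plan is to adapt Martel's argument \cite[Lemma 2]{martel} to the fractional mixed-BC setting, by combining a $\overline{\chi}_1$-moment identity for the flow with a pointwise Dirichlet heat-kernel lower bound. First, testing the weak formulation of \eqref{probP} against the normalized first eigenfunction $\overline{\chi}_1$ of \eqref{probEVm} and using the symmetry of the fractional bilinear form together with the eigenvalue equation $(-\Delta)^s\overline{\chi}_1=\lambda_1\overline{\chi}_1$, I obtain the ODE $\tfrac{d}{dt}\int_\Omega u(\cdot,t)\overline{\chi}_1\,dx=-\lambda_1\int_\Omega u(\cdot,t)\overline{\chi}_1\,dx$, which integrates to
\[
\int_\Omega u(x,t)\overline{\chi}_1(x)\,dx=e^{-\lambda_1 t}\int_\Omega u_0\,\overline{\chi}_1\,dx,\qquad t\geq 0.
\]
By Proposition \ref{bounds}, $\overline{\chi}_1$ is comparable to $\overline{\xi_0}$ and hence, through the Dirichlet Hopf-type estimate used in Remark \ref{comparison}, to $\delta^s$, so this identity yields a comparable decay for the $\delta^s$-weighted moment of $u(\cdot,t)$.

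Next I need a pointwise semigroup bound of the form
\[
S(\tau)v_0(x)\geq \kappa(\tau)\,\delta^s(x)\int_\Omega v_0(y)\delta^s(y)\,dy,\qquad v_0\geq 0,\ \tau>0,
\]
with $\kappa(\tau)>0$. By the comparison stated in Remark \ref{comparison}, $S(\tau)v_0$ dominates the analogous Dirichlet semigroup $S^D(\tau)v_0$. The transition density $p^D(\tau,x,y)$ of the Dirichlet fractional heat equation on $\Omega$ satisfies the two-sided bound $p^D(\tau,x,y)\geq c(\tau)\delta^s(x)\delta^s(y)$ on $\Omega\times\Omega$ for $\tau>0$, which is the heat-kernel content of \cite{bonforte,CKS}. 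Integrating against $v_0\geq 0$ gives the asserted display with $\kappa(\tau)=c(\tau)$.

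Finally, I split $t=t/2+t/2$ and use the semigroup identity $u(\cdot,t)=S(t/2)[u(\cdot,t/2)]$. Applying the pointwise bound of the previous paragraph with $\tau=t/2$ and $v_0=u(\cdot,t/2)$, and then inserting the moment decay at time $t/2$, yields
\[
u(x,t)\geq \kappa(t/2)\,\delta^s(x)\int_\Omega u(y,t/2)\delta^s(y)\,dy\geq c(t)\Big(\int_\Omega u_0(y)\delta^s(y)\,dy\Big)\delta^s(x),
\]
with $c(t)=c_0\,\kappa(t/2)\,e^{-\lambda_1 t/2}>0$ for $t\in(0,T]$, which is the claim.

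The main obstacle is the kernel lower bound in the second step: the moment decay and the time-splitting are routine, whereas the quantitative $\delta^s(x)\delta^s(y)$ lower bound for the Dirichlet fractional heat kernel is the deep ingredient, relying on the sharp estimates of \cite{bonforte,CKS}. One has to verify that those results apply at the level of regularity of $\partial\Omega$ considered here and provide $\kappa(\tau)>0$ uniformly on compact subintervals of $(0,T]$; once this is in place, passing from the Dirichlet kernel to the mixed boundary condition is immediate by the comparison of Remark \ref{comparison}.
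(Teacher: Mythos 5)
Your core mechanism --- comparison of the mixed semigroup with the Dirichlet one plus the lower bound $p^D(\tau,x,y)\geq c(\tau)\delta^s(x)\delta^s(y)$ from \cite{bonforte,CKS} --- is exactly the engine behind the paper's own proof (there it appears as the bound $S(t/2)\delta_{x_0}(x)\geq c_0(t)\delta^s(x)$ and again as the Hopf-type bound for the solution emanating from $\chi_B$). However, your first paragraph contains a genuine error. From the moment identity $\int_\Omega u(t)\overline{\chi}_1\,dx=e^{-\lambda_1 t}\int_\Omega u_0\overline{\chi}_1\,dx$ you want to deduce a lower bound for $\int_\Omega u(y,t/2)\delta^s(y)\,dy$, and for that you invoke that $\overline{\chi}_1$ is ``comparable to $\delta^s$''. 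Only the one-sided bound $\overline{\chi}_1\geq c\,\delta^s$ is true: the reverse inequality $\overline{\chi}_1\leq C\delta^s$ fails in the mixed setting, because $\overline{\chi}_1$ (like $\overline{\xi_0}$) satisfies a Neumann condition on $\Sigma_2$ and hence does not vanish on the portion of $\partial\Omega$ adjacent to $\Sigma_2$. This is precisely the lack of boundary decay highlighted in Remark \ref{comparison}, and it is the reason the paper must work with $\overline{\xi_0}$ rather than $\delta^s$ as a barrier. Consequently the step from the $\overline{\chi}_1$-moment of $u(\cdot,t/2)$ to its $\delta^s$-moment is unjustified, and the chain of inequalities in your final display does not close as written.

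The gap is easy to repair, because your second paragraph already contains a complete proof: apply the kernel bound directly at time $t$ with $v_0=u_0$, i.e.
$$u(x,t)=S(t)u_0(x)\geq S^D(t)u_0(x)\geq c(t)\,\delta^s(x)\int_\Omega u_0(y)\delta^s(y)\,dy,$$
so that the moment identity and the time-splitting are superfluous. (Alternatively, keep the splitting but obtain the required bound on $\int_\Omega u(y,t/2)\delta^s(y)\,dy$ by applying the same kernel estimate at time $t/2$ rather than the eigenfunction identity.) For comparison, the paper organizes the two half-steps differently: it first uses the kernel bound to show $u(\cdot,t/2)\geq c_0(t)\|u_0\delta^s\|_{L^1(\Omega)}\chi_B$ on an interior ball $B$, and then lets the semigroup act on $\chi_B$ for another time $t/2$, invoking Hopf's lemma for that solution. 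Both routes rest on the same deep input, the lower bound for the Dirichlet fractional heat kernel, and your caveat about the boundary regularity required for \cite{CKS} applies equally to the paper's argument.
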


\begin{proof}

Let $\{S(t)\}_{t\geq 0}$ be the analytic heat semigroup with zero mixed conditions. Therefore for every $x_0\in B\subseteq\overline{\Omega}$ by Hopf's Lemma, for every $t\in[0,T]$ we get that
$$u(x_0,t/2)=S(t/2) u_0(x_0)\!=\!\int_{\Omega}u_0 (x)S(t/2)\delta_{x_0}(x)\, dx\!\geq\!c_0(t)\!\int_{\Omega}u_0(x)\delta^{s}(x)\, dx,$$
where $\delta_{x_0}$ is the Dirac distribution in $x_0$. That is,
\begin{equation}\label{dieciocho}
u(x,t/2)\geq c_0(t)\|u_0\delta^s\|_{L^{1}(\Omega)}\chi_{B},
\end{equation}
where $\chi_{B}$ is the characteristic function of the ball $B$. Consider now 
$$\mbox{$\widetilde{u}(x,t)$ the solution of \eqref{probP} with initial datum equal to $u(x,t/2)$}$$
and 
$$\mbox{$\bar{u}(x,t)$ the solution of \eqref{probP} with initial datum equal to $\chi_{B}$.}$$ 
Then by \eqref{dieciocho} and the comparison principle it follows that
$$\widetilde{u}(x,t/2)\geq c(t)\|u_0\delta^s\|_{L^{1}(\Omega)} \bar{u}(x,t/2),\, x\in\Omega, t\in[0,T],\, c(t){>} 0.$$
Thus, since by the property of semigroup we have that
$$u(x,t)=S(t)u_0(x)=S(t/2)u(x,t/2)=\widetilde{u}(x,t/2),$$
the previous inequality and the Hopf's Lemma imply
$$u(x,t)\geq c(t)\|u_0\delta^s\|_{L^{1}(\Omega)}\delta^{s}(x),$$
for every $x\in\Omega$ and $t\in[0,T]$ as desired.
\end{proof}

\noindent We can now conclude the 
\begin{demoparabolic}
Looking carefully at the proof of  Theorem \ref{key_parabolico}, we deduce that if $u\geq 0$ solves \eqref{probP} and satisfies $u(t)\geq c(t)\delta^{s}(x)$ for $0<t<T$ then
$$u(x,t)\geq C_0 e^{\lambda_1 t}t^{-1/2\gamma}\overline{\xi_0}.$$
Thus following verbatim the proof of \cite[Corollary 2.8]{DD}, by Proposition \ref{Martel}, the estimate \eqref{estimacion_parabolica} follows.
\end{demoparabolic}


\begin{thebibliography}{1}
\setlinespacing{0.98}
\frenchspacing



\bibitem{AMPP}{ B. Abdellaoui, M. Medina, A. Primo, I. Peral}, {\em The effect of the Hardy potential in some Calder\'on-Zygmund properties for the fractional Laplacian}, Journal of Differential Equations, {\bf 260}, 8160--8206 (2016).
\bibitem{AB98} { G.Alberti, G.Bellettini},  {\em A nonlocal anisotropic model for phase transitions.
I. The optimal profile problem}. Math. Ann. {\bf 310} (3) (1998), 527--560.

\bibitem{A} { D. Applebaum}, ``L\'evy Processes and Stochastic Calculus", 2nd ed, Cambridge
Studies in Advanced Mathematics {\bf 116}, Cambridge University Press, Cambridge, 2009.

\bibitem{barles}{G. Barles, E. Chasseigne, C. Georgelin, E. Jakobsen}, {\em On Neumann type problems for nonlocal equations in a half space}, Trans. Amer. Math. Soc. {\bf 366} (2014), 4873-4917.

\bibitem{BCSS}{ B. Barrios, E. Colorado, R. Servadei, F. Soria}, {\em A critical fractional equation with concave-convex nonlinearities}, Ann. Inst. H. Poincaré Anal. Non Linéaire 32 (2015), no. 4, 875–900.

\bibitem{BFR15} B. Barrios, A. Figalli, X. Ros-Oton, \emph{Global regularity for the free boundary in the obstacle problem for the fractional Laplacian}. To appear in Amer. J. Math.
\bibitem{BFR16} B. Barrios, A. Figalli, X. Ros-Oton, \emph{Free boundary in the parabolic fractional obstacle problem}. To appear in Comm. Pure Appl. Math.

\bibitem{bmp}{B. Barrios, M. Medina, I. Peral}, {\em Some remarks on the solvability of non local elliptic problems with the Hardy potential.} Com. Contemp. Math. {\bf 16}, No. 4 (2014).

\bibitem{Stefano}{B. Barrios, I. Peral, S. Vita}, {\em Some remarks about the summability of nonlocal nonlinear problems}. Advances in Nonlinear Analysis. Vol {\bf 4} (2015) Issue 2, 91-107.



\bibitem{Be} { J. Bertoin}, ``L\'evy Processes", Cambridge Tracts in Mathematics, {\bf 121}.
Cambridge University Press, Cambridge, 1996.

\bibitem{bogdan} {K. Bogdan, K. Burdzy, Z.-Q. Chen}, {\em Censored stable processes}. Probab. Theory Relat. Fields {\bf 127} (2003), 89-152.

\bibitem{bonforte}{M. Bonforte, A. Figalli, X. Ros-Oton}, {\em Infinite speed of propagation and regularity of solutions to
the fractional porous medium equation in general domains}. To appear in Comm. Pure Appl. Math.

\bibitem{BoG} {J. P. Bouchaud, A. Georges}, {\em Anomalous diffusion in disordered media},
Statistical mechanics, models and physical applications, Physics reports {\bf 195} (1990).




\bibitem{brezcabre} { H. Brezis,  X. Cabr\'{e}}, {\em Some simple nonlinear PDE's without solutions}. Boll. Unione Mat. Ital. 1-B (1998), 223-262.

\bibitem{CF} L. Caffarelli, A. Figalli, \emph{Regularity of solutions to the parabolic fractional obstacle problem}, J. Reine Angew. Math., {\bf 680} (2013), 191-233.

\bibitem{crs}{ L. Caffarelli, J. M. Roquejoffre, Y. Sire}, {\em Variational problems in free boundaries for the fractional Laplacian}, J. Eur. Math. Soc., {\bf 12} (2010), 1151-1179.

\bibitem{CaV} { L. Caffarelli, L. Vasseur}, {\em Drift diffusion equations with fractional diffusion and
the quasi-geostrophic equation},  Ann. of Math. (2) {\bf 171} (2010), no. 3, 1903--1930.

\bibitem{CKS} {Z. Chen, P. Kim, R. Song}, {\em Heat kernel estimates for the Dirichlet fractional Laplacian}, J. Eur. Math. Soc. {\bf 12} (2010), 1307-1329.


\bibitem{colorado} { E. Colorado, I. Peral}, 
{\it Semilinear elliptic problems with mixed Dirichlet-Neumann
boundary conditions}. J. Funct. Anal., {\bf199} (2003), 468-507.

\bibitem{Co} {P. Constantin}, {\em Euler equations, Navier-Stokes equations and turbulence}, in
``Mathematical Foundation of Turbulent Viscous Flows", Vol. {\bf 1871} of Lecture Notes in Math.,
Springer, Berlin, 2006.

\bibitem{CoT} { R. Cont, P. Tankov}, ``Financial Modelling with Jump Processes", Chapman \&
Hall/CRC Financial Mathematics Series, Boca Raton, Fl, 2004.


\bibitem{cortazar}{C. Cortazar, M. Elgueta, J. Rossi, N. Wolanski,} {\em How to approximate the heat equation with Neumann boundary conditions by nonlocal diffusion problems}. Arch. Rat. Mech. Anal. {\bf 187} (2008), 137-156.

\bibitem{D} {J. D\'avila}, {\em A Strong Maximum Principle for the Laplace Equation with Mixed Boundary Condition}, Journal of Functional Analysis, {\bf183}, 23--244 (2001).

\bibitem{DD} {J. D\'avila, L. Dupaigne}, {\em Comparison results for PDEs with a singular potential}, Proceedings of the Royal Society of Edinburgh, {\bf133A}, 61--83 (2003).

\bibitem{guida} { E. Di Nezza, G. Palatucci, E. Valdinoci},
{\em Hitchhiker's guide to the fractional Sobolev spaces.}
Bull. Sci. math., {\bf 136} (2012), no. 5, 521--573.

\bibitem{dfv} { S. Dipierro, A. Figalli, E. Valdinoci}, {\em Strongly nonlocal dislocation dynamics in crystals}, Comm. Partial Differential Equations {\bf 39} (2014), no. 12, 2351--2387.
\bibitem{DRV} {S. Dipierro, X. Ros-Oton, E. Valdinoci}, {\em  Nonlocal problems with Neumann boundary conditions.} 

\bibitem{Ev} {L. C. Evans}, {\it Partial Differential Equations}, Graduate Studies in Mathematics, Vol. {\bf19}, American Mathematical Society.



\bibitem{hardy}{L. Ihnatsyeva, J. Lehrback, H. Tuominen, A. Vahakangas}, {\it Fractional Hardy inequalities and visibility of the boundary.} To be published in Studia Math.


\bibitem{Leonori} {T. Leonori, I. Peral, A. Primo, F. Soria}, {\em Basic estimates for solution of elliptic and parabolic equations for a class of nonlocal operators}. Discrete and Contin- uous Dynamical Systems, {\bf 35} (2015).

\bibitem{martel}{Y. Martel}, {\em Complete blow up and global behaviour of solutions of $u_t-\Delta u=g(u)$.} Ann. Henr. Poincar\'e. Vol. {\bf 15} (1998), Issue 6, 687-723.


\bibitem{anibal}{A. Rodriguez-Bernal}, {\em Introduction to semigroup theory for partial differential equations.} Copyright (c) 1998-2005 by A.Rodriguez-Bernal.

\bibitem{quim} { X. Ros-Oton, J. Serra}, {\em The Dirichlet problem for the fractional laplacian: regularity up to the boundary}. J. Math. Pures Appl.

\bibitem{signorini}{ A. Signorini}, {\em Questioni di elasticit\'{a} non linearizzata e semilinearizzata}, Rendiconti di
Matematica e delle sue applicazioni {\bf 18} (1959), 95--139.


\bibitem{PhS}{ L. Silvestre},
{\em Regularity of the obstacle problem for a fractional power of
the Laplace operator}. Comm. Pure Appl. Math., {\bf 60} (2007), no. 1, 67-112. 

\bibitem{SV08b}{ Y. Sire, E. Valdinoci}, {\em  Fractional Laplacian phase transitions and boundary reactions: a geometric inequality and a symmetry result.} J. Funct. Anal. {\bf 256} (6) (2009), 1842--1864.

\bibitem{Stein} { E. M. Stein},  {\em Singular integrals and differentiability
properties of functions}. Princeton Mathematical Series, No. 30 Princeton
University Press, Princeton, N.J. 1970.

\bibitem{toland}{J. Toland}, {\em The Peierls-Nabarro and Benjamin-Ono equations.} J. Funct. Anal. {\bf 145} (1) (1997), 136--150.


\end{thebibliography}
\end{document}